\documentclass[reqno,12pt]{amsart}
\usepackage{amsmath,amssymb,color}
\usepackage{amsthm}
\usepackage{comment}
\usepackage{graphicx}

\newtheorem{theorem}{Theorem}

\newtheorem{proposition}{Proposition}
\newtheorem{remark}{Remark}

\def\re{\mathbb{R}}

\def\N{\mathbb{N}}

\def\Sp{\mathbb{S}}
\def\eps{\varepsilon}

\def\pd{\partial}
\def\ol{\overline}

\def\la{\lambda}

\def\disp{\displaystyle}
\def\({\left(}
\def\){\right)}

\def\pd{\partial}

\def\intO{\int_{\Omega}}

\def\|{\Vert}
\def\weakto{\rightharpoonup}

\begin{document}
\title[Quasilinear minimization problems]
{Some quasilinear minimization problems involving Hardy potentials}

\author{Futoshi Takahashi}

\address{
Department of Mathematics, Osaka Metropolitan University \\
3-3-138, Sumiyoshi-ku, Sugimoto-cho, Osaka, Japan \\
}

\email{futoshi@omu.ac.jp \\}

\author{Kota Urabe}

\address{
Department of Mathematics, Osaka Metropolitan University \\
3-3-138, Sumiyoshi-ku, Sugimoto-cho, Osaka, Japan \\
}

\email{parutt98@gmail.com \\}

\begin{abstract}
In this note, we consider several quasiliniear minimization problems involving various Hardy type potentials
for functions in $W^{1,p}(\Omega)$ with weighted mean zero.
We prove that the strict inequality for the infimum yields the existence of a minimizer.
\end{abstract}

\subjclass[2020]{Primary 34C23; Secondary 37G99.}

\keywords{Hardy-type inequalities. Minimization problems.}
\date{\today}

\dedicatory{}

\maketitle

%
%
\section{Introduction}

Let $\Omega \subset \re^N$, $N \ge 2$, be a smooth bounded domain with $0 \in \Omega$ and $1 < p < \infty$, $p \ne N$.
The well-known $W^{1,p}_0$-Hardy inequality states that if we define
\begin{equation*}
	H_p(\Omega) := \inf_{\substack{u \in W^{1,p}_0(\Omega \setminus \{ 0 \}) \\ u \neq 0}} \frac{\intO |\nabla u|^p dx}{\intO \frac{|u|^p}{|x|^p} dx},
\end{equation*}
then $H_p(\Omega) > 0$, $H_p(\Omega)$ does not depend on $\Omega$, in fact $H_p(\Omega) = H_p = \Big|\frac{N-p}{p}\Big|^p$, 
and the infimum is never attained in $W^{1,p}_0(\Omega \setminus \{ 0 \})$.
Note that $W^{1,p}_0(\Omega \setminus \{ 0 \}) =  W^{1,p}_0(\Omega)$ if and only if $1 \le p \le N$.

Let $1 < p < N$.
In stead of the space $W^{1,p}_0(\Omega)$, let us define a space
\begin{equation}
\label{X_p}
	X_p = \{ u \in W^{1,p}(\Omega) \ : \ \intO \frac{u}{|x|^p} dx = 0 \}
\end{equation}
and $W^{1,p}$-Hardy best constant for weighted mean zero functions
\begin{equation}
\label{Hbar}
	\bar{H}_p(\Omega) := \inf_{\substack{u \in X_p \\ u \neq 0}} \frac{\int_{\Omega} |\nabla u|^p dx}{\int_{\Omega} \frac{|u|^p}{|x|^p}dx}.
\end{equation}

Concerning $\bar{H}_p(\Omega)$ in \eqref{Hbar}, we obtain the following:
\begin{theorem}
\label{Thm1}
Let $1 < p < N$. Then we have $\bar{H}_p(\Omega) > 0$. 
Moreover, if we assume 
\[
	\bar{H}_p(\Omega) < H_p = \( \frac{N-p}{p} \)^p,
\]
then $\bar{H}_p(\Omega)$ is attained by a non-zero function $u \in X_p$.
\end{theorem}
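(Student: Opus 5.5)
We argue by contradiction and compactness for positivity, and by the direct method with a concentration analysis at the origin for attainment. Note first that for $1<p<N$ the weight $|x|^{-p}$ is integrable on $\Omega$. By H\"older, $\intO |u|/|x|^p\,dx \le (\intO |x|^{-p}dx)^{1/p'} (\intO |u|^p|x|^{-p}dx)^{1/p}$, and the Hardy inequality on bounded domains for $W^{1,p}(\Omega)$ functions (localize with a cutoff $\phi\in C_c^\infty(\Omega)$, $\phi\equiv1$ near $0$, apply $H_p$ to $\phi u$, and bound the remainder by $\|u\|_{L^p}$) gives $\intO |u|^p|x|^{-p}dx \le C\|u\|_{W^{1,p}}^p$. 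Hence $X_p$ is a closed subspace of $W^{1,p}(\Omega)$, and the constraint is weakly continuous.

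\emph{Positivity.} Suppose $\bar H_p(\Omega)=0$. Take $u_n\in X_p$ with $\intO |u_n|^p|x|^{-p}=1$ and $\intO|\nabla u_n|^p\to0$. Then $\|u_n\|_{L^p}$ is bounded, since $|x|^{-p}\ge (\mathrm{diam}\,\Omega)^{-p}$. So $u_n$ is bounded in $W^{1,p}$, and up to a subsequence $u_n\to c$ (a constant, as $\nabla u_n\to0$ in $L^p$) strongly in $W^{1,p}$. By the continuity above, the weighted integrals pass to the limit. The constraint gives $c\intO|x|^{-p}=0$, so $c=0$, while $\intO |c|^p|x|^{-p}=1$. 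This is a contradiction.

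\emph{Attainment.} Let $u_n\in X_p$ be a minimizing sequence normalized by $\intO |u_n|^p|x|^{-p}=1$, so $\intO|\nabla u_n|^p\to \bar H_p$. As above, $u_n$ is bounded in $W^{1,p}$, hence $u_n\weakto u$ in $W^{1,p}$ and $u_n\to u$ in $L^p$ and a.e. Moreover $u\in X_p$, since $u_n\to u$ in $L^q(\Omega,|x|^{-p}dx)$ for $q<p$ (the linear constraint is weakly continuous, given the integrability of $|x|^{-p}$ and the uniform $L^p$-weighted bound). Set $v_n=u_n-u$. The key step is to rule out loss of mass at $0$. By Brezis--Lieb, $1=\intO|u|^p|x|^{-p}+\intO|v_n|^p|x|^{-p}+o(1)$. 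For the gradients I would use the a.e. convergence $\nabla u_n\to\nabla u$, which does not come for free, or alternatively a concavity argument avoiding it: $\liminf\intO|\nabla u_n|^p\ge \intO|\nabla u|^p+\limsup\intO|\nabla v_n|^p$ up to $o(1)$, the latter requiring a Brezis--Lieb splitting for gradients. Instead, I localize: with the cutoff $\phi$, $\phi v_n\in W^{1,p}_0(\Omega)$, so
\[
\intO |\nabla(\phi v_n)|^p \ge H_p\intO \frac{|\phi v_n|^p}{|x|^p},
\]
and the terms involving $(1-\phi)$ or $\nabla\phi$ vanish because $v_n\to0$ in $L^p$ and the weight is bounded away from $0$. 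So $\intO |v_n|^p|x|^{-p}\le H_p^{-1}\intO|\nabla v_n|^p+o(1)$.

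Writing $a=\intO|u|^p|x|^{-p}$ and $L=\lim\intO|\nabla v_n|^p$, and assuming gradient splitting, we get
\[
\bar H_p=\intO|\nabla u|^p+L \ge \bar H_p\, a + H_p(1-a).
\]
Since $H_p>\bar H_p$, this forces $a=1$. Then $u\ne0$, and weak lower semicontinuity gives $\intO|\nabla u|^p\le\bar H_p$, so $u$ is a minimizer.

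The main obstacle is the gradient splitting $\intO|\nabla u_n|^p=\intO|\nabla u|^p+\intO|\nabla v_n|^p+o(1)$ for $p\ne2$. I would first try to avoid it: use the convexity-type inequality $\intO|\nabla u_n|^p\ge \liminf$ via a Lions-type concentration-compactness measure decomposition $|\nabla u_n|^pdx\weakto\mu\ge|\nabla u|^pdx+\mu_0\delta_0$ and $|u_n|^p|x|^{-p}dx\weakto\nu=|u|^p|x|^{-p}dx+\nu_0\delta_0$, with $\mu_0\ge H_p\nu_0$ from the localized Hardy inequality. This yields the same inequality without a.e. convergence of gradients.
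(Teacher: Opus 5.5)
Your proof is correct, but the attainment part follows a genuinely different route from the paper.

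\textbf{What the paper does.} It does not work with an arbitrary minimizing sequence. It uses the constrained Ekeland principle (Proposition \ref{prop:Ekeland}) to produce a minimizing Palais--Smale sequence. It then tests the multiplier relation with $T(u_n-u)-d_n\in X_p$ to verify hypothesis (b) of Proposition \ref{Prop:VW}. This yields a.e.\ convergence of $\nabla u_n$ and the full gradient splitting \eqref{BL_gradient}, and only then does the paper run the mass-splitting argument (Steps 1 and 2).

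\textbf{What your route does.} Your last paragraph replaces all of this with concentration--compactness at the origin, and it works for \emph{any} minimizing sequence. It rests on three facts on the compact set $\overline{\Omega}$:
\begin{itemize}
\item $|u_n|^p|x|^{-p}dx\weakto |u|^p|x|^{-p}dx+\nu_0\delta_0$, because the weight is bounded outside every neighbourhood of $0$ and $u_n\to u$ in $L^p$.
\item $\mu\ge |\nabla u|^pdx+\mu_0\delta_0$, by weak lower semicontinuity of $\intO\psi|\nabla\cdot|^p$ for $\psi\ge0$ together with the mutual singularity of $\delta_0$ and Lebesgue measure.
\item $\mu_0\ge H_p\nu_0$, by the $W^{1,p}_0(\Omega)$ Hardy inequality applied to $\phi_\delta u_n$ with shrinking cutoffs; the error satisfies $\|u\nabla\phi_\delta\|_p^p\le C\int_{B_\delta}|u|^p|x|^{-p}dx\to0$.
\end{itemize}
Testing with $\psi\equiv1$ gives $\bar H_p\ge \bar H_p a+H_p\nu_0$ with $a+\nu_0=1$, hence $a=1$. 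This single inequality also absorbs the paper's Step 1.

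\textbf{What to fix in the write-up.} Commit to this version and write out the three facts above. The argument in your middle paragraph, stated \emph{assuming gradient splitting}, is not a proof on its own. The inequality $\lim\intO|\nabla u_n|^p\ge\intO|\nabla u|^p+\liminf\intO|\nabla v_n|^p$ is not available for a general weakly convergent sequence when $p\ne2$, since there is no pointwise control of the gradients. That is exactly why the paper needs Ekeland and Valeriola--Willem.

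\textbf{Trade-offs.} Your route is more elementary: it needs no $C^1$-manifold Ekeland principle, no Palais--Smale information and no truncation $T$, only one-sided information. The paper's route gives more about the sequence, namely a.e.\ gradient convergence and strong $W^{1,q}$ convergence for $q<p$. It also sits inside the Valeriola--Willem framework, which covers critical Sobolev problems as well.

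Your positivity argument, via strong $W^{1,p}$ convergence to a constant and continuity of $W^{1,p}(\Omega)\hookrightarrow L^p(\Omega,|x|^{-p}dx)$, is a streamlined version of the paper's cutoff argument.
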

Note that the attainability of $\bar{H}_p(\Omega)$ was first considered by Chabrowski-Peral-Ruf \cite{CPR} for $p=2$ and $N \ge 3$.

More generally, let $\Omega \subset \re^N$ be a bounded smooth domain,
$1 < p < \infty$ and $K \subset \Omega$ be a smooth submanifold with ${\rm codim} \, K = k$, $k=1,\cdots, N$. 
We assume that $K = \{ 0 \} \subset \Omega$ if $k = N$. 
When $k=1$, we exclude that $K = \pd\Omega$ since $K$ is assumed to satisfy $K \subset \Omega$.
Also note that $W^{1,p}_0(\Omega \setminus K) =  W^{1,p}_0(\Omega)$ if and only if $1 \le p \le k$.
Let $1 < p < \infty$, $p \ne k$, and put the assumption $(K)$ on the submanifold $K$:
\[
	-\Delta_p d(x, K)^{\frac{p-k}{p-1}} \ge 0 \quad \text{weakly in} \ W^{1,p}_0(\Omega \setminus K)
\]
where $d(x, K) = {\rm dist} (x, K)$ and $\Delta_p = {\rm div }(| \nabla(\cdot)|^{p-2} \nabla(\cdot))$ denotes the $p$-Laplacian.
Then the following $W^{1,p}_0$-geometric Hardy inequality involving the distance from a manifold is proved by
Barbatis-Filippas-Tertikas \cite{BFT}: Put
\begin{equation}
\label{J_p}
	J_{K, p}(\Omega) := \inf_{\substack{u \in W^{1,p}_0(\Omega \setminus K)\\ u \neq 0}}
	\frac{\int_{\Omega} |\nabla u|^p dx}{\int_{\Omega} \frac{|u|^p}{d(x, K)^p}dx},
\end{equation}
then $J_{K, p}(\Omega) > 0$, $J_{K, p}(\Omega)$ does not depend on $\Omega$ with the value $J_{K, p} = \Big| \frac{k-p}{p} \Big|^p$,
and $J_{K, p}$ is not attained in $W^{1,p}_0(\Omega \setminus K)$.

Now, we assume that $1 < p < k$.
As before, we define a weighted space
\begin{equation}
\label{Y_p}
	Y_p = \{ u \in W^{1,p}(\Omega) \ : \ \intO \frac{u}{d(x, K)^p} dx = 0 \}
\end{equation}
and
\begin{equation}
\label{Jbar}
	\bar{J}_{K,p}(\Omega) := \inf_{\substack{u \in Y_p \\ u \neq 0}} \frac{\int_{\Omega} |\nabla u|^p dx}{\int_{\Omega} \frac{|u|^p}{d(x, K)^p}dx}.
\end{equation}
Then we have the following:
\begin{theorem}
\label{Thm2}
Let $1 < p < k$ and assume that $K \subset \Omega$ satisfies the assumption (K).
Then we have $\bar{J}_{K, p}(\Omega) > 0$.
Moreover, if we assume 
\[
	\bar{J}_{K,p}(\Omega) < J_{K,p} = \(\frac{k-p}{p} \)^p,
\]
then $\bar{J}_{K,p}(\Omega)$ is attained by a non-zero function $u \in Y_p$.
\end{theorem}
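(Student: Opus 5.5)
We prove Theorem \ref{Thm2} by the direct method combined with a localization (concentration) argument near $K$, in parallel with the proof of Theorem \ref{Thm1}. Write $d = d(x,K)$. We first check that $Y_p$ is well defined. Since $p < k$, the weight $d^{-p}$ is integrable near $K$, because $\int_{\{d<r\}} d^{-p}\,dx \sim \int_0^r t^{k-1-p}\,dt < \infty$. The $W^{1,p}_0$ inequality of \cite{BFT} (valid under (K)), localized with a cutoff, gives the local Hardy inequality
\[
\intO \frac{|u|^p}{d^p}\,dx \le C\Big(\intO |\nabla u|^p\,dx + \intO |u|^p\,dx\Big), \qquad u \in W^{1,p}(\Omega).
\]
Hence $u \mapsto \intO u\, d^{-p}\,dx$ is a continuous linear functional on $W^{1,p}(\Omega)$ (by H\"older), and $Y_p$ is a closed subspace.

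\textbf{Positivity.} Suppose $\bar J_{K,p}(\Omega)=0$. Then there are $u_n\in Y_p$ with $\intO |u_n|^p d^{-p}\,dx=1$ and $\intO|\nabla u_n|^p\,dx\to0$. Since $d$ is bounded on $\Omega$, $\intO |u_n|^p\,dx \le C$, so $(u_n)$ is bounded in $W^{1,p}$. By Rellich, $u_n\to c$ strongly in $L^p$ and weakly in $W^{1,p}$, with $\nabla c=0$, so $c$ is a constant. The weighted mean-zero condition passes to weak limits by continuity of the functional, so $c\,\intO d^{-p}\,dx=0$ and $c=0$. Then $u_n\to0$ in $W^{1,p}$, and the local Hardy inequality gives $\intO|u_n|^pd^{-p}\,dx\to0$, contradicting the normalization.

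\textbf{Attainment.} Take a minimizing sequence $u_n\in Y_p$ with $\intO|u_n|^pd^{-p}\,dx=1$ and $\intO|\nabla u_n|^p\,dx\to \bar J:=\bar J_{K,p}(\Omega)$. As above, $(u_n)$ is bounded in $W^{1,p}$. Passing to a subsequence, $u_n\weakto u$ in $W^{1,p}$, $u_n\to u$ in $L^p$ and a.e., and $u\in Y_p$. Set $v_n=u_n-u$. By Brezis--Lieb,
\[
1=\intO\frac{|u|^p}{d^p}\,dx+\intO\frac{|v_n|^p}{d^p}\,dx+o(1).
\]
The key estimate is the following. For every $\eps>0$ there is $C_\eps$ such that
\[
\intO \frac{|v|^p}{d^p}\,dx\le (J_{K,p}^{-1}+\eps)\intO|\nabla v|^p\,dx+C_\eps\intO|v|^p\,dx, \qquad v\in W^{1,p}(\Omega).
\]
To prove it, take a cutoff $\phi$ equal to $1$ on $\{d<\delta\}$ and supported in a small tubular neighborhood $\{d<2\delta\}\Subset\Omega$. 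Apply the $W^{1,p}_0(\Omega\setminus K)$ inequality to $\phi v$ (note $W^{1,p}_0(\Omega\setminus K)=W^{1,p}_0(\Omega)$ since $p<k$), use $|a+b|^p\le(1+\eps)|a|^p+C_\eps|b|^p$ to absorb the $|\nabla\phi|\,|v|$ term, and bound the region where $d\ge\delta$ trivially. Applying this to $v_n$, which tends to $0$ in $L^p$, gives
\[
\limsup_n \intO\frac{|v_n|^p}{d^p}\,dx\le J_{K,p}^{-1}\limsup_n\intO|\nabla v_n|^p\,dx.
\]

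We now combine these. Let $A=\intO|u|^pd^{-p}\,dx$ and $B=\limsup_n\intO|\nabla v_n|^p\,dx$, so that $\limsup_n \intO|v_n|^pd^{-p}\,dx = 1-A$. We need a lower splitting of the gradient term, $\intO|\nabla u_n|^p\,dx\ge\intO|\nabla u|^p\,dx+\intO|\nabla v_n|^p\,dx+o(1)$. This is the main obstacle for $p\neq2$. Brezis--Lieb applies if $\nabla u_n\to\nabla u$ a.e. Failing that, I would instead use a splitting via cutoffs. Since $u_n\to u$ strongly away from $K$ only in $L^p$, I would first try to obtain a.e. convergence of gradients from the minimizing property through Ekeland's principle and the Boccardo--Murat argument. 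Ekeland makes $u_n$ an approximate Palais--Smale sequence for the constrained functional, with the Lagrange multiplier for the linear constraint bounded.

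Given the splitting, $\bar J\ge \intO|\nabla u|^p\,dx+B\ge \bar J A+J_{K,p}(1-A)$. If $A<1$, this together with $\bar J<J_{K,p}$ forces a contradiction: $\bar J(1-A)\ge J_{K,p}(1-A)$. Hence $A=1$, so $u\neq0$, $u\in Y_p$, and weak lower semicontinuity gives $\intO|\nabla u|^p\,dx\le\bar J$. Therefore $u$ is a minimizer.
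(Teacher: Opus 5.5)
\textbf{There is a genuine gap, at the step you yourself flag.} Your positivity argument is correct. It is even a bit quicker than the paper's: $u_n\to0$ strongly in $W^{1,p}$ together with the Cherrier-type inequality immediately contradicts the normalization. Your final bookkeeping $\bar J\ge\bar J A+J_{K,p}(1-A)$ is essentially the paper's Step 2. The problem is the lower splitting $\intO|\nabla u_n|^p\,dx\ge\intO|\nabla u|^p\,dx+\intO|\nabla(u_n-u)|^p\,dx+o(1)$, which is never proved. For $p\ne2$ it does not follow from weak convergence, and nothing in your argument supplies it; ``I would first try Ekeland plus Boccardo--Murat'' is a plan, not a proof.

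This is exactly the heart of the paper's argument. The paper replaces the minimizing sequence by an Ekeland sequence in $Y_p$ with $h_n=\frac1p(F'(u_n)-\la_nG'(u_n))\to0$ in $(Y_p)^*$ and $\la_n\to\bar J_{K,p}(\Omega)$. It tests this with $T(u_n-u)-d_n$, where $T$ is the truncation at level $1$ and $d_n$ is the $d(x,K)^{-p}$-weighted mean of $T(u_n-u)$. The mean is subtracted so that the test function lies in $Y_p$, and $d_n\to0$ by dominated convergence. The four resulting terms are each $o(1)$, using $|T|\le1$, $d(x,K)^{-p}\in L^1(\Omega)$, and $\nabla T(u_n-u)\weakto0$ in $L^p$. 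That is hypothesis (b) of the Valeriola--Willem proposition, which then yields the gradient Brezis--Lieb identity. Your Boccardo--Murat route could plausibly be completed along similar lines: the term $\la_n|u_n|^{p-2}u_n\,d(x,K)^{-p}$ is bounded in $L^1$, and a multiplier for the linear constraint is bounded by testing with constants. But none of this is carried out.

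You also dismissed the cutoff idea for the wrong reason, and it would close the gap on its own: no strong convergence of gradients away from $K$ is needed. Take $\phi_\delta=1$ on $\{d(x,K)<\delta\}$, supported in $\{d(x,K)<2\delta\}$, with $|\nabla\phi_\delta|\le C/\delta$, and split $\intO|\nabla u_n|^p\,dx=\intO(1-\phi_\delta^p)|\nabla u_n|^p\,dx+\intO\phi_\delta^p|\nabla u_n|^p\,dx$.
\begin{itemize}
\item The first term is weakly lower semicontinuous.
\item The second term is bounded below by $(1+\eps)^{-1}(J_{K,p}\intO\phi_\delta^p|u_n|^p d(x,K)^{-p}\,dx-C(\eps)\intO|\nabla\phi_\delta|^p|u_n|^p\,dx)$, by the Hardy inequality applied to $\phi_\delta u_n$.
\end{itemize}
Now let $n\to\infty$, using $u_n\to u$ in $L^p$ where the weight is bounded. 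Then let $\delta\to0$, using $|u|^pd(x,K)^{-p}\in L^1$ to kill the $\nabla\phi_\delta$ term. Finally let $\eps\to0$. This gives $\bar J\ge\intO|\nabla u|^p\,dx+J_{K,p}(1-A)$ directly, for any minimizing sequence, and that is all your last step uses. Either this or the paper's Ekeland--Valeriola--Willem route closes the gap; as written, the proposal does not.
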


\vspace{1em}
Also in the case of $p = N$ and $K = \{ 0 \} \subset \Omega$, we assume that $\Omega \subset\subset B$ where $B$ is the unit ball in $\re^N$.
In this situation, the critical Hardy inequality for functions in $W^{1,N}_0(\Omega)$ states that if we define 
\begin{equation}
\label{C_N}
	C_N(\Omega) := \inf_{\substack{u \in W^{1,N}_0(\Omega) \\ u \neq 0}} \frac{\intO |\nabla u|^N dx}{\intO \frac{|u|^N}{|x|^N (\log \frac{1}{|x|})^N} dx},
\end{equation}
then $C_N(\Omega) > 0$, $C_N(\Omega)$ does not depend on $\Omega$ with the value $C_N = \(\frac{N-1}{N}\)^N$,
and the infimum is again not attained in $W^{1,N}_0(\Omega)$.

As before, we define the space 
\begin{equation}
\label{X_N}
	X_N = \{ u \in W^{1,N}(\Omega) \ : \ \intO \frac{u}{|x|^N (\log \frac{1}{|x|})^N} dx = 0 \}
\end{equation}
and the $W^{1,N}$-Hardy best constant for weighted mean zero functions
\begin{equation}
\label{Cbar}
	\bar{C}_N(\Omega) := \inf_{\substack{u \in X_N \\ u \neq 0}} \frac{\int_{\Omega} |\nabla u|^N dx}{\int_{\Omega} \frac{|u|^N}{|x|^N (\log \frac{1}{|x|})^N} dx}.
\end{equation}
Then we have
\begin{theorem}
\label{Thm3}
Let $N \ge 2$.
Then we have $\bar{C}_N(\Omega) > 0$.
Moreover, if we assume 
\[
	\bar{C}_N(\Omega) < C_N = \(\frac{N-1}{N}\)^N,
\]
then $\bar{C}_N(\Omega)$ is attained by a non-zero function $u \in X_N$.
\end{theorem}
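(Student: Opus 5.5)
We prove Theorem \ref{Thm3} by the direct method, with a concentration-at-the-origin analysis; the critical Hardy inequality \eqref{C_N} controls the part of a minimizing sequence that concentrates. Write $w(x) = |x|^{-N}(\log\frac{1}{|x|})^{-N}$. Since $\Omega \subset\subset B$, the weight $w$ is bounded on $\Omega\setminus B_\delta$ for every $\delta>0$. Also $\intO w\,dx<\infty$, because $\int_0^{r_0} r^{-1}(\log \frac1r)^{-N}dr<\infty$ for $N\ge 2$. Hence the constraint in \eqref{X_N} is a well-defined continuous linear functional on $W^{1,N}(\Omega)$, once we know $u w\in L^1$. This follows from the localized inequality below together with H\"older's inequality, $\int |u| w \le (\int |u|^N w)^{1/N}(\int w)^{1-1/N}$.

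The basic tool is a localized form of the critical Hardy inequality. Take a cutoff $\phi\in C_c^\infty(B_{2\delta})$ with $\phi\equiv1$ on $B_\delta$. Applying \eqref{C_N} to $\phi u\in W^{1,N}_0(\Omega)$ and using the elementary inequality $|a+b|^N\le(1+\eps)|a|^N+C_\eps|b|^N$, we get for every $\eps>0$
\[
C_N\intO \frac{|u|^N\phi^N}{|x|^N(\log\frac1{|x|})^N}dx \le (1+\eps)\intO |\nabla u|^N dx + C_{\eps,\delta}\int_{\Omega\cap\{|x|>\delta\}}|u|^N dx .
\]
Since $w$ is bounded away from $0$, it follows that $\intO |u|^N w\,dx \le C(\|\nabla u\|_N^N+\|u\|_N^N)$.

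\emph{Positivity.} Suppose $\bar C_N(\Omega)=0$. Then there are $u_n\in X_N$ with $\intO|u_n|^Nw=1$ and $\|\nabla u_n\|_N\to0$. By Poincar\'e--Wirtinger, $u_n-c_n\to0$ in $L^N$, where $c_n$ is the mean of $u_n$. Moreover $\|u_n\|_{W^{1,N}}$ stays bounded: if $c_n\to\infty$ along a subsequence, then $v_n=u_n/c_n\to 1$ in $W^{1,N}$, and the localized inequality gives $\intO|v_n|^N w \to \intO w\neq 0$, which contradicts $\intO|v_n|^Nw=c_n^{-N}\to0$. So, along a subsequence, $u_n\to c$ in $W^{1,N}$, and the localized inequality then gives $u_n\to c$ in $L^N(w\,dx)$. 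In particular $\intO |c|^N w=1$, so $c\ne0$. On the other hand, passing to the limit in the constraint (weak continuity of the linear constraint) gives $c\intO w=0$, so $c=0$. This is a contradiction.

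\emph{Attainment.} Normalize a minimizing sequence so that $\intO|u_n|^Nw=1$ and $\intO|\nabla u_n|^N\to\bar C_N$. As in the positivity argument, $u_n$ is bounded in $W^{1,N}$; otherwise rescaling produces constants in the limit again. Extract a subsequence with $u_n\weakto u$ in $W^{1,N}$, strongly in $L^N(\Omega)$, and a.e. The constraint is a continuous linear functional, so it passes to the weak limit and $u\in X_N$. The main step is to show $\intO|u|^Nw=1$. Set $v_n=u_n-u$. Away from the origin, $w$ is bounded, so $\int_{|x|>\delta}|v_n|^Nw\to0$. Near the origin we apply the localized inequality to $v_n$ and let $n\to\infty$, then $\eps\to 0$:
\[
C_N\limsup_n\int|v_n|^Nw\le \limsup_n\intO|\nabla v_n|^N .
\]
Brezis--Lieb applies to both sides: to the weighted $L^N$ norms, and to the gradients after first securing a.e.\ convergence of $\nabla u_n$ or at least a splitting inequality. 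This gives
\[
1=\intO|u|^Nw+\lim_n\int|v_n|^Nw \quad\text{and}\quad \bar C_N=\lim_n\intO|\nabla u_n|^N \ \ge\ \intO|\nabla u|^N+\limsup_n \intO|\nabla v_n|^N .
\]
Put $L=\lim_n\int|v_n|^Nw$. Since $u\in X_N$, we have $\intO|\nabla u|^N\ge\bar C_N\intO|u|^Nw$. Therefore
\[
\bar C_N\ge \bar C_N(1-L)+C_NL .
\]
Because $C_N>\bar C_N$, this forces $L=0$. Hence $u$ is nonzero, $\intO|u|^Nw=1$, and by weak lower semicontinuity of the gradient norm $u$ is a minimizer.

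The main obstacle is the gradient splitting for $p=N\neq2$. Brezis--Lieb needs a.e.\ convergence of $\nabla u_n$, which weak convergence does not give. I would first try to avoid it by using convexity: the inequality $|a+b|^N\ge|a|^N+|b|^N-C(|a|^{N-1}|b|+|a||b|^{N-1})$ together with weak convergence. If that fails, I would obtain a.e.\ gradient convergence by Ekeland's principle, which produces a Palais--Smale-type minimizing sequence, and then apply the standard Boccardo--Murat argument for the $N$-Laplacian.
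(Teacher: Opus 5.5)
The proposal has a genuine gap at one step; everything else is sound. The localized Hardy inequality, boundedness, the weak limit $u\in X_N$, Brezis--Lieb for $|u_n|^N w$, Hardy applied to $v_n=u_n-u$, and the closing inequality $\bar C_N\ge\bar C_N(1-L)+C_NL$ are exactly the paper's Step 2. The missing step is the gradient splitting
\[
\lim_{n\to\infty}\intO|\nabla u_n|^N\,dx\;\ge\;\intO|\nabla u|^N\,dx+\limsup_{n\to\infty}\intO|\nabla v_n|^N\,dx .
\]
For $N\ge3$ this is the real difficulty. For $N=2$ it is immediate, since $|a+b|^2=|a|^2+2a\cdot b+|b|^2$ and $\intO\nabla u\cdot\nabla v_n\,dx\to0$.

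You flag this step yourself, but neither plan is carried out, and the first one cannot work. With $a=\nabla u$, $b=\nabla v_n$, the error terms $\intO|\nabla u|^{N-1}|\nabla v_n|\,dx$ and $\intO|\nabla u|\,|\nabla v_n|^{N-1}\,dx$ involve $|\nabla v_n|$, and $\nabla v_n\weakto0$ gives no control on them. In fact the splitting inequality is false for general weakly convergent sequences. For $N=3$, take $u_n(x)=x_1+\frac1n g(nx_1)$ with $g$ $1$-periodic, $g'=-2$ on a fraction $q\in(0,\frac17)$ of each period and $g'=\frac{2q}{1-q}$ elsewhere. Then $\nabla u_n\weakto e_1$, but
\[
\lim_n\intO|\nabla u_n|^3\,dx=|\Omega|\Bigl(q+\frac{(1+q)^3}{(1-q)^2}\Bigr),
\qquad
\intO|e_1|^3\,dx+\lim_n\intO|\nabla u_n-e_1|^3\,dx=|\Omega|\Bigl(1+8q+\frac{8q^3}{(1-q)^2}\Bigr),
\]
and the second quantity exceeds the first by $|\Omega|\frac{2q(1-7q)}{1-q}>0$. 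So you must use information beyond weak convergence and pointwise convexity.

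Your fallback is the paper's route, but the substance is in details you have not supplied. An arbitrary minimizing sequence does not suffice. Ekeland's principle (Proposition \ref{prop:Ekeland}) on $\{G=1\}\subset X_N$ gives $h_n=\frac1N\bigl(F'(u_n)-\la_n G'(u_n)\bigr)\to0$ in $(X_N)^*$ with $\la_n\to\bar C_N$. Since $T(u_n-u)\notin X_N$ in general, the paper tests with $T(u_n-u)-d_n$, where $d_n=(\intO w\,dx)^{-1}\intO T(u_n-u)\,w\,dx\to0$ by dominated convergence. The truncation is essential. Testing with $u_n-u\in X_N$ would leave $\la_n\intO|u_n|^{N-2}u_n(u_n-u)\,w\,dx\to\bar C_N L$, which is nonzero exactly in the concentration scenario you must exclude. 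With $|T|\le1$ and $w\in L^1(\Omega)$, this term tends to $0$. This verifies hypothesis (b) of Proposition \ref{Prop:VW}, which gives \eqref{BL_gradient}, and your argument then closes.

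Alternatively, you can keep an arbitrary minimizing sequence and avoid the gradient splitting by concentration compactness. Let $|\nabla u_n|^N dx\overset{*}{\weakto}\mu$ and $|u_n|^N w\,dx\overset{*}{\weakto}\nu$ on $\overline{\Omega}$. Then:
\begin{enumerate}
\item $\nu=|u|^N w\,dx+\nu_0\delta_0$.
\item Weak lower semicontinuity gives $\mu\ge|\nabla u|^N dx+\mu_0\delta_0$, where $\mu_0=\mu(\{0\})$.
\item Apply \eqref{C_N} to $\phi_\rho u_n$ with the logarithmic cutoff $\phi_\rho=\min\{1,\max\{0,\log(\rho/|x|)/\log(1/\rho)\}\}$. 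The cross term $\intO|\nabla\phi_\rho|^N|u|^N\,dx$ is at most $2^N\int_{\rho^2<|x|<\rho}|u|^N w\,dx\to0$, which yields $C_N\nu_0\le\mu_0$. An ordinary cutoff $\phi(x/\rho)$ does not work here, because $u\in W^{1,N}$ may be unbounded at $0$.
\end{enumerate}
Then $\bar C_N\ge\bar C_N(1-\nu_0)+C_N\nu_0$, and $\bar C_N<C_N$ forces $\nu_0=0$.
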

The attainability of $\bar{C}_N(\Omega)$ was first considered by Sano-Takahashi \cite{Sano-TF(JGA)} for $N=2$.

In many constrained variational problems, we have to treat weakly convergent sequences on appropriate function spaces,
say, $W^{1,p}(\Omega)$ or $W^{1,p}_0(\Omega)$ for $1 < p < \infty$.
One of the fundamental tools to treat such problems is the Brezis-Lieb Lemma \cite{Brezis-Lieb}, which states that
for $0 < p < \infty$ and $\{u_n\} \subset L^p(\Omega)$ satisfying
\begin{enumerate}
\item[(i)] $u_n(x) \to u(x)$ a.e. $x \in \Omega$,
\item[(ii)] $\disp{\sup_n \|u_n\|_{L^p(\Omega)}} < \infty$,
\end{enumerate}
the limit in the next exists and the following holds:
\begin{equation}
\label{BL}
	\lim_{n \to \infty} \left( \|u_n\|_{L^p(\Omega)}^p - \|u_n - u\|_{L^p(\Omega)}^p \right) = \| u \|_{L^p(\Omega)}^p.
\end{equation}
Note that if (i), (ii) holds, then we have 
\[
	u_n \weakto u \ \text{weakly in} \ L^p(\Omega) \quad (1 < p < \infty),
\]
see e.g, Hewitt-Stromberg's textbook \cite{HS} Theorem 13.44.
If $p=2$, then the Brezis-Lieb identity \eqref{BL} holds under the assumption $u_n \overset{w}{\weakto} u$ weakly in $L^2(\Omega)$ only.
Since $W^{1,p}(\Omega)$ is reflexive for $1 < p < \infty$,
we can subtract a weak convergent subsequence from a norm bounded sequence $\sup_{n \in \N} \| u_n \|_{W^{1,p}(\Omega)} < \infty$.
However, because of the lack of information of pointwise convergence of $\nabla u_n$, 
\begin{equation}
\label{BL_gradient}
	\intO |\nabla u_n|^p dx - \intO |\nabla (u_n - u)|^p dx = \intO |\nabla u|^p dx + o_n(1) 
\end{equation}
does not hold true for a weak convergent sequence $u_n \overset{w}{\weakto} u$ in $W^{1,p}(\Omega)$ if $p \ne 2$ in general,
which is shown by a simple one-dimensional example.
On the other hand, in the Hilbert space case $p = 2$, \eqref{BL_gradient} is easily verified for any weak convergent sequences $\{ u_n \} \subset W^{1,2}(\Omega)$.

In order to overcome this difficulty in the quasilinear minimization problems $(p \ne 2)$, 
many authors study sufficient conditions for the Brezis-Lieb type identity of the gradient \eqref{BL_gradient} holds true for weak convergent sequences, 
see \cite{Adimurthi-Tintarev}, \cite{BM}, \cite{VW}.
In this paper, we use a theorem proved by Valeriola-Willem \cite{VW} to obtain results above.
In \cite{VW}, the authors studied the quasilinear minimization problems involving the critical Sobolev exponent,
which is suffering from the non-compactness of the embedding, say, $W^{1,p}(\Omega) \hookrightarrow L^{p^*}(\Omega)$,
where $p^* = \frac{Np}{N-p}$ is the critical Sobolev exponent.
In this paper, we treat the problems of which difficulties arise from the non-compactness of the embedding
say, $W^{1,p}(\Omega) \hookrightarrow L^p(\Omega, \frac{dx}{|x|^p})$, where $L^p(\Omega, \frac{dx}{|x|^p})$ is the weighted Lebesgue space with the weight $|x|^{-p}$.

%
%
\section{Preliminaries}

In this section, we collect several facts needed for the proof of our results.

First, we prove the following Cherrier-type Hardy inequalities:
Let $\Omega \subset \re^N$ be a bounded smooth domain 
and let $K \subset \Omega$ be a smooth submanifold with ${\rm codim} \, K = k$, $k=1,\cdots, N$
satisfying the assumption $(K)$.
Then we have

\begin{proposition}
\label{Cherrier_J}
Let $1 < p < k$ and let $J_{K,p} = \(\frac{k-p}{p}\)^p$.
Then for any $\eps > 0$, there exists $C(\eps) > 0$ such that
\begin{equation}
\label{Eq:Cherrier_J}
	\( J_{K,p} - \eps \) \intO \frac{|u|^p}{d(x, K)^p} dx \le \intO |\nabla u|^p dx + C(\eps) \intO |u|^p dx
\end{equation}
holds for any $u \in W^{1,p}(\Omega) (= W^{1,p}(\Omega \setminus K))$.
\end{proposition}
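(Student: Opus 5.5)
The plan is a localization argument. Near $K$ we use the Barbatis--Filippas--Tertikas inequality \eqref{J_p}, and away from $K$ the weight $d(x,K)^{-p}$ is bounded.

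Fix $\eps>0$ and $\delta>0$ small, to be chosen depending on $\eps$. Let $\phi\in C^\infty_c(\re^N)$ be a cutoff with $0\le\phi\le1$, $\phi\equiv1$ on $K_\delta=\{x: d(x,K)<\delta\}$, $\operatorname{supp}\phi\subset K_{2\delta}$ and $|\nabla\phi|\le C/\delta$. Since $K\subset\Omega$ is compact, for $\delta$ small $K_{2\delta}\subset\subset\Omega$.

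For $u\in W^{1,p}(\Omega)$ we have $\phi u\in W^{1,p}_0(\Omega)=W^{1,p}_0(\Omega\setminus K)$, because $p<k$ and so $K$ has zero $p$-capacity. This is the identification noted in the statement. Applying \eqref{J_p} to $\phi u$ gives
\[
J_{K,p}\intO \frac{|\phi u|^p}{d(x,K)^p}dx\le \intO|\nabla(\phi u)|^p dx .
\]
Next we use the elementary inequality $|a+b|^p\le(1+\eta)|a|^p+C_\eta|b|^p$ with $a=\phi\nabla u$ and $b=u\nabla\phi$. This yields
\[
\intO|\nabla(\phi u)|^pdx\le(1+\eta)\intO|\nabla u|^pdx+C_\eta\delta^{-p}\intO|u|^pdx .
\]
On the complement, $\phi$ need not equal $1$, but the region where $\phi<1$ lies outside $K_\delta$, where $d(x,K)\ge\delta$. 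Writing $|u|^p\le \phi^p|u|^p+(1-\phi^p)|u|^p$, we get
\[
\intO\frac{|u|^p}{d(x,K)^p}dx\le\intO\frac{|\phi u|^p}{d(x,K)^p}dx+\delta^{-p}\intO|u|^pdx .
\]

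Combining the two estimates,
\[
J_{K,p}\intO\frac{|u|^p}{d^p}dx\le(1+\eta)\intO|\nabla u|^pdx+\big(C_\eta+J_{K,p}\big)\delta^{-p}\intO|u|^pdx .
\]
Dividing by $1+\eta$ and choosing $\eta$ so that $J_{K,p}/(1+\eta)\ge J_{K,p}-\eps$ gives \eqref{Eq:Cherrier_J} with $C(\eps)=(C_\eta+J_{K,p})\delta^{-p}$. Here $\delta$ is any fixed admissible value; it does not even need to shrink with $\eps$.

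The only delicate point is justifying that $\phi u$ is admissible in \eqref{J_p}, i.e.\ that $W^{1,p}_0(\Omega)=W^{1,p}_0(\Omega\setminus K)$ when $p<k$. The paper states this, and it can also be seen by approximating with functions that are cut off at distance $r$ from $K$. The capacity estimate $\int|\nabla\psi_r|^p\sim r^{k-p}\to0$ makes this approximation work.
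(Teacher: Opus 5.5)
Your proof is correct and follows the paper's localization argument: you apply the Barbatis--Filippas--Tertikas inequality to $\phi u$, control $|\nabla(\phi u)|^p$ with the elementary inequality, and use that the weight is bounded where $\phi<1$. The only difference is that you split the weighted term exactly as $\phi^p|u|^p+(1-\phi^p)|u|^p$, whereas the paper applies the elementary inequality to $u=\phi u+(1-\phi)u$; your split saves one auxiliary parameter, and your capacity remark makes explicit the identification $W^{1,p}_0(\Omega)=W^{1,p}_0(\Omega\setminus K)$ that the paper takes for granted.
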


\begin{proof}
Recall the following elementary inequality:
Let $p \ge 1$. Then for any $\eps > 0$, there exists $C(\eps) > 0$ such that for all $a, b \in \re^N$,
\begin{equation}
\label{elementary}
	|a + b|^p \le (1 + \eps) |a|^p + C(\eps) |b|^p
\end{equation}
holds.
Let $u \in W^{1,p}(\Omega)$ be given.
Since $K$ is a compact submanifold of codimension $k=1,2,\cdots, N$ contained in $\Omega$,
there exists open sets $K \subset\subset U \subset\subset \Omega$ 
and $\phi \in C_0^{\infty}(\Omega)$ such that $\phi \equiv 1$ on $U$, $0 \le \phi \le 1$. 
Apply the Hardy inequality \eqref{J_p} by \cite{BFT} for $\phi u \in W^{1,p}_0(\Omega)$, 
we obtain
\begin{equation}
\label{H-1}
	J_{K,p} \intO \frac{|\phi u|^p}{d(x, K)^p} dx \le \intO |\nabla (\phi u)|^p dx.
\end{equation}
By the elementary inequality \eqref{elementary}, we see
\begin{align*}
	|\nabla (\phi u)|^p = |(\nabla \phi) u + (\nabla u)\phi|^p \le (1 + \eps') |\phi|^p|\nabla u|^p + C(\eps') |\nabla \phi|^p |u|^p
\end{align*}
holds for any $\eps' > 0$.
Thus the right hand-side of \eqref{H-1} can be estimated from above as
\begin{equation}
\label{H-2}
	\intO |\nabla (\phi u)|^p dx \le (1 + \eps') \intO |\nabla u|^p dx + C(\eps') \(\max_{x \in \Omega} |\nabla \phi|^p\)  \intO |u|^p dx.
\end{equation}
On the other hand,
$1-\phi \equiv 0$ on $U$.
Thus $\frac{|(1-\phi)|^p}{d(x, K)^p} \in L^{\infty}(\Omega \setminus K)$ and again by the elementary inequality \eqref{elementary}, we obtain
\begin{align}
\label{H-3}
	\intO \frac{|u|^p}{d(x, K)^p} dx &= \intO \frac{|\phi u + (1-\phi) u|^p}{d(x, K)^p} dx \notag \\ 
	&\overset{\eqref{elementary}}{\le} (1+\eps'') \intO \frac{|\phi u|^p}{d(x, K)^p} dx + C(\eps'') \intO  \frac{|(1-\phi)|^p|u|^p}{d(x, K)^p} dx \notag \\ 
	&\le (1+\eps'') \intO \frac{|\phi u|^p}{d(x, K)^p} dx + C(\eps'') \max_{x \in \Omega} \frac{|(1-\phi)|^p}{d(x, K)^p} \intO  |u|^p dx.
\end{align}
Insert \eqref{H-2}, \eqref{H-3} into \eqref{H-1}, then
\begin{align*}
	\intO \frac{|u|^p}{d(x, K)^p} dx &\overset{\eqref{H-3}}{\le} (1 + \eps'') \intO \frac{|\phi u|^p}{d(x, K)^p} dx + C(\eps'') \max_{x \in \Omega} \frac{|(1-\phi)|^p}{d(x, K)^p} \intO |u|^p dx \\
	&\overset{\eqref{H-1}}{\le} (1+\eps'') J_{K,p}^{-1} \intO |\nabla (\phi u)|^p dx + C(\eps'') \max_{x \in \Omega} \frac{|(1-\phi)|^p}{|x|^p} \intO |u|^p dx \\
	&\overset{\eqref{H-2}}{\le} (1+\eps'') J_{K,p}^{-1} \left\{ (1+\eps') \intO |\nabla u|^p dx + C(\eps') \(\max_{x \in \Omega} |\nabla \phi|^p\)  \intO |u|^p dx \right\} \\
	&+ C(\eps'') \max_{x \in \Omega} \frac{|(1-\phi)|^p}{d(x, K)^p} \intO |u|^p dx \\
	&= J_{K,p}^{-1} (1+\eps'')(1+\eps') \intO |\nabla u|^p dx \\
	&+ \underbrace{\left\{ J_{K,p}^{-1} (1+\eps'') C(\eps') \(\max_{x \in \Omega} |\nabla \phi|^p\) + C(\eps'') \max_{x \in \Omega} \frac{|(1-\phi)|^p}{d(x, K)^p} \right\}}_{=: C(\eps',\eps'')} \intO  |u|^p dx \\
	&= J_{K,p}^{-1} (1+\eps'')(1+\eps') \intO |\nabla u|^p dx + C(\eps',\eps'') \intO |u|^p dx.
\end{align*}
Thus for any $\eps > 0$, if we take $\eps', \eps'' > 0$ sufficiently small so that
\[
	\( J_{K,p} - \eps \) J_{K,p}^{-1} (1+\eps')(1+\eps'') \le 1,
\]
then we have
\[
	\( J_{K,p} - \eps \) \intO \frac{|u|^p}{d(x, K)^p} dx \le \intO |\nabla u|^p dx + C(\eps) \intO |u|^p dx,
\]
here
\[
	C(\eps) = C(\eps', \eps'') \( J_{K,p} -\eps \).
\]
\end{proof}

Let $B$ denote the unit ball in $\re^N$, $N \ge 2$.
By a similar proof as above, we obtain the following $W^{1,N}(\Omega)$-Cherrier type Hardy inequality.
Since the proof is quite similar, we omit it here. 

\begin{proposition}
\label{Cherrier_N}
Let $\Omega \subset\subset B \subset \re^N$ be a domain containing $\{ 0 \}$ 
and put $C_N = \(\frac{N-1}{N}\)^N$.
Then for any $\eps > 0$, there exists $C(\eps) > 0$ such that
\begin{equation}
\label{Eq:Cherrier_N}
	\( C_N - \eps \) \intO \frac{|u|^N}{|x|^N (\log \frac{1}{|x|})^N} dx \le \intO |\nabla u|^N dx + C(\eps) \intO |u|^N dx
\end{equation}
holds for any $u \in W^{1,N}(\Omega)$.
\end{proposition}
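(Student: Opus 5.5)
We follow the proof of Proposition \ref{Cherrier_J} line by line, with the distance weight replaced by the critical logarithmic weight
\[
	w(x) := \frac{1}{|x|^N (\log \frac{1}{|x|})^N}, \qquad x \in \Omega\setminus\{0\}.
\]
Since $\Omega \subset\subset B$, we have $\sup_{x\in\Omega}|x| \le R < 1$. Hence $\log\frac{1}{|x|} \ge \log\frac1R > 0$ on $\Omega$, and $w$ is positive and smooth on $\Omega\setminus\{0\}$. It is singular only at the origin, and $w \le C_\delta$ on $\{|x|\ge\delta\}\cap\Omega$ for every $\delta>0$.

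First we choose the cutoff. Take $\delta>0$ with $B_{2\delta}\subset\subset\Omega$ and $\phi\in C_0^\infty(\Omega)$ with $0\le\phi\le1$, $\phi\equiv1$ on $U:=B_\delta$. For $u\in W^{1,N}(\Omega)$ we have $\phi u\in W^{1,N}_0(\Omega)$, so the critical Hardy inequality \eqref{C_N}, which is assumed known, gives
\[
	C_N\intO w|\phi u|^N\,dx\le\intO|\nabla(\phi u)|^N\,dx.
\]
The elementary inequality \eqref{elementary} with exponent $N$ yields
\[
	\intO|\nabla(\phi u)|^N\,dx\le(1+\eps')\intO|\nabla u|^N\,dx+C(\eps')\max_{\Omega}|\nabla\phi|^N\intO|u|^N\,dx.
\]

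Next we split $u=\phi u+(1-\phi)u$ in the weighted integral and apply \eqref{elementary} again:
\[
	\intO w|u|^N\,dx\le(1+\eps'')\intO w|\phi u|^N\,dx+C(\eps'')\sup_{\Omega\setminus\{0\}}\big(w\,|1-\phi|^N\big)\intO|u|^N\,dx.
\]
The supremum is finite because $1-\phi$ vanishes on $B_\delta$, and on $\Omega\setminus B_\delta$ we have $w\le\delta^{-N}(\log\frac1R)^{-N}$.

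Combining the three estimates gives
\[
	\intO w|u|^N\,dx\le C_N^{-1}(1+\eps')(1+\eps'')\intO|\nabla u|^N\,dx+C(\eps',\eps'')\intO|u|^N\,dx.
\]
Given $\eps>0$, we choose $\eps',\eps''$ so small that $(C_N-\eps)C_N^{-1}(1+\eps')(1+\eps'')\le1$. Multiplying through by $C_N-\eps$ then gives \eqref{Eq:Cherrier_N} with $C(\eps)=(C_N-\eps)C(\eps',\eps'')$.

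The only point needing more care than in Proposition \ref{Cherrier_J} is the boundedness of $w$ away from the origin. This is exactly where the hypothesis $\Omega\subset\subset B$ enters: it keeps $\log\frac1{|x|}$ bounded away from zero, so $w$ has no second singularity on $\pd B$. We must also check that the infimum in \eqref{C_N} is at least $C_N$, so that it applies to $\phi u\in W^{1,N}_0(\Omega)$; this is the known critical Hardy inequality quoted in the introduction.
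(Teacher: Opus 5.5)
Your proof is correct and follows the paper's approach: the paper omits the proof as "quite similar" to that of Proposition \ref{Cherrier_J}, and you have carried that argument out with the weight $|x|^{-N}(\log\frac{1}{|x|})^{-N}$. As there, you cut off near the origin, apply the $W^{1,N}_0$ critical Hardy inequality to $\phi u$, and use \eqref{elementary} twice. You also correctly identify $\Omega \subset\subset B$ as what keeps $\log\frac{1}{|x|} \ge \log\frac{1}{R} > 0$, so the weight is bounded on $\Omega \setminus B_\delta$.
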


Define 
\[
	T(s) =
	\begin{cases}
	s, & (|s| \le 1), \\[4pt]
	\dfrac{s}{|s|}, & (|s| > 1).
	\end{cases}
\]

\begin{proposition}(Valeriola--Willem \cite{VW})
\label{Prop:VW}
Let $\Omega \subset \re^N$ be a bounded domain and $p > 1$.
Assume $\{u_n\} \subset W^{1,p}(\Omega)$ satisfy 
\begin{enumerate}
\item[(a)] $u_n \overset{w}{\weakto} u$ in  $W^{1,p}(\Omega)$,
\item[(b)] 
\[
	\int_{\Omega} \( |\nabla u_n|^{p-2} \nabla u_n - |\nabla u|^{p-2} \nabla u \) \cdot \nabla T(u_n - u)\, dx
	\to 0 \quad (n \to \infty).
\]
\end{enumerate}
Then the following holds:
\begin{enumerate}
\item[(i)] There exists a subsequence $\{u_{n_k}\}$ such that
\[
	\nabla u_{n_k} \to \nabla u \quad \text{a.e. on } \Omega.
\]
\item[(ii)] (Brezis-Lieb type Lemma for the gradient)
\[
	\lim_{n \to \infty} \( \int_{\Omega} |\nabla u_n|^p dx - \int_{\Omega} |\nabla (u_n - u)|^p dx \) = \int_{\Omega} |\nabla u|^p dx.
\]
\item[(iii)] For $1 \le q < p$, we have 
\[
	u_n \to u \quad \text{in } W^{1,q}(\Omega).
\]
\end{enumerate}
\end{proposition}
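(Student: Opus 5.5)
The plan is to get almost everywhere convergence of the gradients from the monotonicity of the vector field $a \mapsto |a|^{p-2}a$, and then deduce (ii) and (iii) from standard measure-theoretic tools: the Brezis--Lieb Lemma and Vitali's theorem.

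\textbf{Step 1: a nonnegative integrand.} Put
\[
e_n := \( |\nabla u_n|^{p-2}\nabla u_n - |\nabla u|^{p-2}\nabla u \)\cdot \nabla(u_n-u) .
\]
Strict monotonicity of $a\mapsto |a|^{p-2}a$ gives $e_n \ge 0$, with $e_n(x)=0$ if and only if $\nabla u_n(x)=\nabla u(x)$. By (a), $\{u_n\}$ is bounded in $W^{1,p}(\Omega)$, so H\"older's inequality gives $\sup_n \intO e_n\,dx<\infty$.

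\textbf{Step 2: reading off hypothesis (b).} Since $\nabla T(u_n-u)=\chi_{\{|u_n-u|<1\}}\nabla(u_n-u)$ a.e., hypothesis (b) says exactly that $\int_{A_n} e_n\,dx\to 0$, where $A_n=\{|u_n-u|<1\}$.

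\textbf{Step 3: the complement is small.} Next I show $|\Omega\setminus A_n|\to 0$. By Rellich on an exhaustion of $\Omega$ by smooth subdomains $\Omega_j\subset\subset\Omega$, together with a diagonal argument, some subsequence satisfies $u_n\to u$ a.e. Since $|\Omega|<\infty$, this gives convergence in measure. To avoid passing to a subsequence, I note that every subsequence has a further subsequence with $|\Omega\setminus A_n|\to 0$, so the full sequence has this property.

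\textbf{Step 4: a.e. convergence of gradients (part (i)).} This is the key step, and I expect it to be the main obstacle, because $e_n$ is only controlled on $A_n$. The way around is to fix $\theta\in(0,1)$ and apply H\"older's inequality separately on $A_n$ and on its complement:
\[
\intO e_n^\theta\,dx \le |\Omega|^{1-\theta}\Big(\int_{A_n} e_n\,dx\Big)^\theta + |\Omega\setminus A_n|^{1-\theta}\Big(\intO e_n\,dx\Big)^\theta .
\]
By Steps 1--3 the right-hand side tends to $0$. Hence $e_n^\theta\to 0$ in $L^1(\Omega)$, and along a subsequence $e_n\to 0$ a.e.

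To conclude $\nabla u_{n_k}\to\nabla u$ at such a point $x$, with $b=\nabla u(x)$ fixed:
\begin{itemize}
\item $e_n\to 0$ forces $\nabla u_n(x)$ to stay bounded, since $e_n \gtrsim |\nabla u_n|^p - C(|\nabla u_n|^{p-1}+|\nabla u_n|+1)$.
\item Any limit point $a$ of $\nabla u_n(x)$ satisfies $(|a|^{p-2}a-|b|^{p-2}b)\cdot(a-b)=0$.
\item Strict monotonicity then gives $a=b$.
\end{itemize}

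\textbf{Step 5: parts (ii) and (iii).} For (ii), along the subsequence the gradients converge a.e. and are bounded in $L^p$, so the Brezis--Lieb Lemma \eqref{BL}, applied componentwise in its vector-valued form, gives the identity. For (iii), with $q<p$ the families $|\nabla u_n-\nabla u|^q$ and $|u_n-u|^q$ are uniformly integrable, because they are bounded in $L^{p/q}$. Both tend to $0$ a.e., so Vitali's theorem gives $u_n\to u$ in $W^{1,q}(\Omega)$.

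\textbf{Step 6: the full sequence.} The whole argument applies verbatim to any subsequence of $\{u_n\}$, and the limits in (ii) and (iii) are independent of the subsequence chosen. Hence (ii) and (iii) hold for the full sequence, while (i) holds along a subsequence, as stated.
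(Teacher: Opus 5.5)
The paper gives no proof of this proposition (it is quoted from \cite{VW}); your argument is correct and is essentially the standard Boccardo--Murat-type proof of it. The ingredients are the H\"older splitting of $\intO e_n^\theta\,dx$, $0<\theta<1$, over $\{|u_n-u|<1\}$ and its complement, strict monotonicity of $a\mapsto|a|^{p-2}a$ applied pointwise, Brezis--Lieb for (ii), Vitali for (iii), and the subsequence principle to pass to the full sequence.

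The one point to tidy is (ii). There you need the vector-valued Brezis--Lieb lemma, which has the same proof using $\bigl||a+b|^p-|a|^p\bigr|\le\eps|a|^p+C_\eps|b|^p$ for $a,b\in\re^N$. Applying the scalar lemma to each component separately would only give the identity for $\sum_i|\partial_i u_n|^p$, which is not $|\nabla u_n|^p$ when $p\neq 2$.
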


We need a version of Ekeland variational principle which assures the existence of the ``minimizing" Palais-Smale sequence for constrained variational problems.
We refer the readers to N. Ghoussoub \cite{Ghoussoub} Theorem 3.2 (based on a $C^1$-deformation lemma, Lemma 3.7), 
M. Cuesta \cite{Cuesta} Theorem 2.1 (based on the original Ekeland's variational principle and which avoids the use of deformation arguments), 
M. Willem \cite{Willem} Theorem 8.5 (in this reference, $C^2$-regularity of the constraint manifold is assumed to obtain an appropriate tangent pseudo-gradient vector field in a deformation theorem, however, it can be bypassed),
or most directly, the classic paper by I. Ekeland \cite{Ekeland(JMAA)} Corollary 3.4, Theorem 3.1.

\begin{proposition}
\label{prop:Ekeland}
Let $X$ be a Banach space, $F \in C^1(X, \re)$, $G \in C^1(X, \re)$, and define $C^1$-manifold
\[
	V := \{ v \in X \, : \, G(v) = 1 \}
\]
Assume that $G'(v) \ne 0$ for any $v \in V$ and $F$ is bounded from below on $V$.
Take any $v \in V$ and $\eps > 0$.
If $v$ satisfies
\[
	F(v) \le \inf_{V} F + \eps,
\]
then there exists $u_{\eps} \in V$ such that
\[
	F(u) \le \inf_{V} F + \eps^2 \quad \text{and} \quad \min_{\la \in \re} \| F'(u_{\eps}) - \la G'(u_{\eps}) \|_{X^*} \le \eps
\]
holds true.
\end{proposition}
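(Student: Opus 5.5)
The plan is to apply the classical Ekeland variational principle on the constraint set $V$, viewed as a complete metric space, and then convert the resulting almost-minimality inequality into a bound on the tangential part of $F'(u_\eps)$.

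\emph{Step 1: Ekeland on $V$.} Since $G$ is continuous, $V=G^{-1}(1)$ is closed in the Banach space $X$, hence complete with the metric induced by $\|\cdot\|_X$. $F$ is continuous and bounded below on $V$. Ekeland's principle with parameters $(\delta,\lambda)$ applied to a point $v\in V$ with $F(v)\le \inf_V F+\delta$ gives $u_\eps\in V$ with three properties: $F(u_\eps)\le F(v)$, $\|u_\eps-v\|\le\lambda$, and
\[
F(w)\ge F(u_\eps)-\tfrac{\delta}{\lambda}\|w-u_\eps\| \quad\text{for all } w\in V.
\]
To get both the level $\inf_V F+\eps^2$ and the slope $\eps$ simultaneously, I will start from a point at level $\eps^2$, which exists because $\inf_V F$ is finite. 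I then take $\delta=\eps^2$ and $\lambda=\eps$, so the slope is $\delta/\lambda=\eps$. This gives $F(u_\eps)\le \inf_V F+\eps^2$ together with the inequality above with constant $\eps$. This is the reading of the statement I commit to, since $\eps^2$-level and $\eps$-slope cannot both be extracted from an $\eps$-level point in general.

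\emph{Step 2: tangential derivative bound.} Set $u=u_\eps$. Since $G'(u)\neq0$, fix $z\in X$ with $G'(u)z=1$. Let $h\in\ker G'(u)$ with $\|h\|=1$. Apply the implicit function theorem to $(t,s)\mapsto G(u+th+sz)-1$ at $(0,0)$; this is legitimate because $G\in C^1$ and $\partial_s=G'(u)z=1\ne0$. It yields a $C^1$ function $s(t)$ with $s(0)=0$ and $s'(0)=-G'(u)h=0$. Hence the curve $w(t)=u+th+s(t)z$ lies in $V$ and satisfies $\|w(t)-u\|=|t|+o(t)$. Inserting $w(t)$ into the Ekeland inequality, dividing by $t>0$ and letting $t\to0^+$ gives $F'(u)h\ge-\eps$. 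Replacing $h$ by $-h$ gives $|F'(u)h|\le\eps$. Thus the restriction of $f:=F'(u)$ to the subspace $\ker g$, where $g:=G'(u)$, has norm at most $\eps$.

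\emph{Step 3: duality.} It remains to show that $\min_{\la\in\re}\|f-\la g\|_{X^*}=\|f|_{\ker g}\|$. The inequality "$\ge$" is trivial, because $f-\la g=f$ on $\ker g$. For "$\le$", extend $f|_{\ker g}$ by Hahn--Banach to some $\tilde f\in X^*$ with the same norm. Then $f-\tilde f$ vanishes on $\ker g$, so $f-\tilde f=\la g$ for some $\la$ by the standard lemma on functionals with containing kernels. This gives $\|f-\la g\|=\|\tilde f\|\le\eps$. The minimum over $\la$ is attained because $\la\mapsto\|f-\la g\|$ is continuous and coercive.

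I expect the main technical point to be Step 2: producing, for each tangent direction, a genuine curve in $V$ with first-order displacement exactly $h$, using only $C^1$ regularity of $G$. The implicit function construction above handles this and avoids any deformation lemma or $C^2$ hypothesis. The remaining steps are routine.
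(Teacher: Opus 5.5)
Your proof is correct and complete. It proves the statement as literally written, which does not tie $u_\eps$ to $v$. The hypothesis $F(v)\le\inf_V F+\eps$ is almost certainly a typo for $\eps^2$, and $F(u)$ for $F(u_\eps)$. Under that reading you can run Ekeland directly from $v$ and also get the localized conclusions $F(u_\eps)\le F(v)$ and $\|u_\eps-v\|\le\eps$.

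The paper gives no proof of this proposition; it only cites Ghoussoub, Cuesta, Willem and Ekeland's original paper. Your route has three parts:
\begin{itemize}
\item Ekeland's principle on the closed, hence complete, set $V$.
\item Genuine curves $t\mapsto u+th+s(t)z$ in $V$, obtained from the two-variable implicit function theorem, giving $|F'(u_\eps)h|\le\eps$ for unit $h\in\ker G'(u_\eps)$.
\item Hahn--Banach together with the kernel-inclusion lemma, producing the multiplier $\la$.
\end{itemize}
This is essentially the argument behind the Ekeland and Cuesta references, which the paper singles out as avoiding deformation arguments.

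The alternative cited proofs (Ghoussoub, Willem) go through a deformation lemma. That requires building a pseudo-gradient field tangent to $V$, and in Willem's version $C^2$ regularity of the constraint. The deformation approach buys generality to minimax levels on $V$, such as mountain-pass-type critical values. Your approach uses only $C^1$ regularity of $F$ and $G$, which justifies the paper's remark that the $C^2$ hypothesis can be bypassed. It is also fully sufficient for the paper's purpose, namely producing, with $\eps=1/n$, the minimizing Palais--Smale sequence used in the proofs of the theorems.
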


%
%
\section{Proof of theorems}

\vspace{1em}\noindent
{\it Proof of Theorem \ref{Thm1} and Theorem \ref{Thm2}}.
Theorem \ref{Thm1} is a special case of Theorem \ref{Thm2} ($K = \{ 0 \} \subset \Omega$, $d(x, K) = |x|$), 
so it is enough to prove Theorem \ref{Thm2}.
Recall \eqref{Jbar}:
\begin{align*}
	\bar{J}_{K,p}(\Omega) = \inf_{\substack{u \in Y_p \\ u \neq 0}} \frac{\int_{\Omega} |\nabla u|^p dx}{\int_{\Omega} \frac{|u|^p}{d(x, K)^p}dx}
\end{align*}
for $1 < p < k$, where $Y_p$ is as in \eqref{Y_p}.
%
%

Define $A: W^{1,p}(\Omega) \to \re$ as
\[
	A(u) := \intO \frac{u}{d(x, K)^p} \, dx, \quad u \in W^{1,p}(\Omega).
\]
Then $Y_p = \{ u \in W^{1,p}(\Omega) : A(u) = 0 \}$.
By Cherrier-type Hardy inequality \eqref{Eq:Cherrier_J} in Proposition \ref{Cherrier_J} and the fact that $\frac{1}{d(x, K)^p} \in L^1(\Omega)$ when $p < k$,
we have
\begin{align*}
	|A(u)| \le \(\intO \frac{|u|^p}{d(x, K)^{p}} \)^{1/p} \(\intO \frac{dx}{d(x, K)^p} \)^{1-1/p} \le C \(\intO |\nabla u|^p + |u|^p dx \)^{1/p}.
\end{align*}
Therefore $A$ is a bounded linear functional on $W^{1,p}(\Omega)$ and $Y_p$ is a closed subspace in $W^{1,p}(\Omega)$.

%
%
First, we prove that $\bar{J}_{K, p}(\Omega) > 0$. 
Assume the contrary that $\bar{J}_{K, p}(\Omega) = 0$. 
Then there exists a sequence $\{ u_n \} \subset W^{1,p}(\Omega)$ such that
\begin{align*}
	\begin{cases}
	&\intO |\nabla u_n|^p dx \to \bar{J}_{K, p}(\Omega) = 0, \\
	&\intO \frac{u_n}{d(x, K)^p} dx = 0 \quad (n \in \N), \\
	&\intO \frac{|u_n|^p}{d(x, K)^p} dx = 1 \quad (n \in \N).
	\end{cases}
\end{align*}
Since
\[
	\intO |u_n|^p dx \le (\max_{x \in \Omega} d(x, K)^p) \intO \frac{|u_n|^p}{d(x, K)^p} dx = O_n(1),
\]
we see that $\{ u_n \} \subset Y_p$ is a norm bounded sequence in $W^{1,p}(\Omega)$.
Then up to a subsequence, there exists $u \in W^{1,p}(\Omega)$ such that
\begin{align*}
	\begin{cases}
	&u_n \overset{w}{\weakto} u \quad \text{weakly in} \ W^{1,p}(\Omega), \\
	&u_n \overset{s}{\to} u \quad \text{in} \ L^p(\Omega), \\
	&u_n(x) \to u(x) \quad a.e. \ \Omega.
	\end{cases}
\end{align*}
Since $Y_p$ is weakly closed in $W^{1,p}(\Omega)$ and $u_n \in Y_p$, we see that 
\[
	\int_{\Omega} \frac{u}{d(x, K)^p} dx = 0.
\] 
Now, Banach-Steinhaus theorem implies that
\[
	\intO |\nabla u|^p dx \le \liminf_{n \to \infty} \intO |\nabla u_n|^p dx = 0,
\]
so $u$ is a constant on $\Omega$.
Also since $u \in Y_p$, we obtain $u \equiv 0$ and $u_n \to u \equiv 0$ in $L^p(\Omega)$.
Let us take an open set $U \subset \Omega$ such that $K \subset U$. 
Take $\phi \in C_0^{\infty}(\Omega)$, $0 \le \phi \le 1$, $\phi \equiv 1$ on $U$,
and consider $v_n = \phi u_n \in W^{1, p}_0(\Omega)$.
Then as in \eqref{H-2} in the proof of Proposition \ref{Cherrier_J}, we see 
\[
	\intO |\nabla v_n|^p dx \le (1 + \eps) \intO |\nabla u_n|^p dx + C \(\max_{x \in \Omega} |\nabla \phi|^p \)  \intO |u_n|^p dx = o_n(1).
\]
Also since $\frac{|\phi|^p}{d(x, K)^p} \in L^{\infty}(\Omega \setminus \bar{U})$,
wee see $\int_{\Omega \setminus \bar{U}} \frac{|\phi u_n|^p}{d(x, K)^p} dx = o_n(1)$.
Thus
\begin{align*}
	&\intO \frac{|v_n|^p}{d(x, K)^p} dx = \int_{\Omega \setminus \bar{U}} \frac{|\phi u_n|^p}{d(x, K)^p} dx + \int_{U} \frac{|u_n|^p}{d(x, K)^p} dx \\
	&= o_n(1) +  \intO \frac{|u_n|^p}{d(x, K)^p} dx - \int_{\Omega \setminus \bar{U}} \frac{|u_n|^p}{d(x, K)^p} dx \\
	&= 1 + o_n(1).
\end{align*}
Thus we see that $\intO \frac{|v_n|^p}{d(x, K)^p} dx \ge C > 0$ for some $C > 0$.
Then by testing \eqref{J_p} by $v_n \in W^{1,p}_0(\Omega)$, we obtain a contradiction that 
\[
	0 < \(\frac{k-p}{p}\)^p \le \frac{\intO |\nabla v_n|^p dx}{\intO \frac{|v_n|^p}{d(x, K)^p} dx} = o_n(1).
\]
Thus $\bar{J}_{K, p}(\Omega) > 0$.

Next, define $C^1$-functionals $F, G: Y_p \to \re$ as
\begin{align*}
	F(u) := \int_{\Omega} |\nabla u|^p dx,  \quad G(u) := \int_\Omega \frac{|u|^p}{d(x, K)^p} dx.
\end{align*}
Then by Ekeland variational principle Proposition \ref{prop:Ekeland}, there exists $\{u_n\} \subset Y_p$ and $\la_n \in \re$ such that 
\begin{align*}
	&F(u_n) \to \bar{J}_{K,p}(\Omega), \quad G(u_n) = 1 \quad (n \in \mathbb{N}), \\
	&h_n := \frac{1}{p} \( F'(u_n) - \la_n G'(u_n) \) \to 0 \quad \text{in } (Y_p)^{*}.
\end{align*}
By testing the second equation by $u_n$, it is easy to check that $\la_n = \bar{J}_{K,p}(\Omega) + o_n(1)$ as $n \to \infty$.
Define
\[
	d_n = \( \intO \frac{1}{d(x, K)^p} dx \)^{-1} \intO \frac{T(u_n-u)}{d(x, K)^p} dx, \quad (n \in \N).
\]
Then we see $\intO \frac{T(u_n-u) -d_n}{d(x, K)^p} dx = 0$ and $T(u_n - u) - d_n \in Y_p$.
Also since 
\[
	u_n(x) \to u(x) \quad a.e. \ \Omega, \quad \frac{|T(u_n-u)|}{d(x, K)^p} \le \frac{1}{d(x, K)^p} \in L^1(\Omega),
\] 
Lebesgue's dominated convergence theorem implies that $d_n = o_n(1)$.
We see
\begin{align*}
	\langle h_n, T(u_n - u) -d_n \rangle &= \int_{\Omega} |\nabla u_n|^{p-2} \nabla u_n \cdot \nabla T(u_n - u) dx \\
	&- (\bar{J}_{K,p}(\Omega) + o_n(1)) \int_{\Omega} \frac{|u_n|^{p-2} u_n \, (T(u_n - u) - d_n)}{{{d(x, K)}^p}} dx,
\end{align*}
which implies
\begin{align*}
	&\int_{\Omega} \( |\nabla u_n|^{p-2} \nabla u_n - |\nabla u|^{p-2} \nabla u \) \cdot \nabla T(u_n - u) dx \\
	&= \langle h_n, T(u_n - u) - d_n \rangle \quad \cdots (A) \\
	&+ (\bar{J}_{p,K}(\Omega) + o_n(1)) \intO \frac{|u_n|^{p-2} u_n \, T(u_n - u)}{{{d(x, K)}^p}} \, dx \quad \cdots (B) \\
	&- (\bar{J}_{p,K}(\Omega) + o_n(1)) d_n \intO \frac{|u_n|^{p-2} u_n}{{{d(x, K)}^p}} \, dx \quad \cdots (C) \\
	&- \int_{\Omega} |\nabla u|^{p-2} \nabla u \cdot \nabla T(u_n - u) dx. \quad \cdots (D)
\end{align*}
In the following, we estimate (A), (B), (C), (D) respectively.

\vspace{1em}\noindent
{\it Estimate of (A)}: Since $h_n = o_n(1)$ in $(Y_p)^*$ strongly and $\sup_{n \in \N} \| T(u_n - u) - d_n \|_{W^{1,p}(\Omega)} = O_n(1)$, 
we have
\[
	(A) = \langle h_n, T(u_n - u) - d_n \rangle = o_n(1) \quad (n \to \infty).
\]

\vspace{1em}\noindent
{\it Estimate of (B)}:
\begin{align*}
	\left| \int_{\Omega} \frac{|u_n|^{p-2} u_n \, T(u_n - u)}{{{d(x, K)}^p}} dx \right| 
	\le \(\intO \frac{|u_n|^p}{d(x, K)^p} dx\)^{\frac{p-1}{p}} \(\intO \frac{|T(u_n-u)|^p}{d(x, K)^p} dx\)^{\frac{1}{p}}
\end{align*}
and
\[
	 \frac{|T(u_n-u)|^p}{d(x, K)^p} \le \frac{1}{d(x, K)^p} \in L^1(\Omega), \quad \frac{|T(u_n-u)|^p}{d(x, K)^p} \to 0 \quad a.e. \ \text{in} \ \Omega.
\]
Thus $(B) = o_n(1)$ by Lebesgue's dominated convergence theorem.

\vspace{1em}\noindent
{\it Estimate of (C)}:
Since
\begin{align*}
	\left| \int_{\Omega} \frac{|u_n|^{p-2} u_n)}{{{d(x, K)}^p}} dx \right| 
	\le \(\intO \frac{|u_n|^p}{d(x, K)^p} dx\)^{\frac{p-1}{p}} \(\intO \frac{1}{d(x, K)^p} dx\)^{\frac{1}{p}} = O_n(1)
\end{align*}
and $d_n = o_n(1)$, we have $(C) = o_n(1)$ as $n \to \infty$.

\vspace{1em}\noindent
{\it Estimate of (D)}:
Let
\[
	f: L^p(\Omega, \re^N) \to \re, \quad f(\vec{v}) = \intO |\nabla u|^{p-2} \nabla u \cdot \vec{v} \, dx.
\]
We can easily check that $f \in (L^p(\Omega, \re^N))^*$ and $\nabla T(u_n - u) \overset{w}{\weakto} 0$ weakly in $L^p(\Omega, \re^N)$.
Thus $(C) = f(\nabla T(u_n-u)) = o_n(1)$ by the definition of weak convergence.

In conclusion, the assumption (b) in Valeriola-Willem's Proposition \ref{Prop:VW}: 
\[
	\int_{\Omega} \( |\nabla u_n|^{p-2} \nabla u_n - |\nabla u|^{p-2} \nabla u \) \cdot \nabla T(u_n - u)\, dx
	\to 0 \quad (n \to \infty)
\]
holds for a minimizing Palais-Smale sequence $\{ u_n \}$.
Thus by Proposition \ref{Prop:VW} we have the Brezis-Lieb type identity \eqref{BL_gradient} for $\{ \nabla u_n \}$ and $\nabla u$:
\[
	\lim_{n \to \infty} \( \int_{\Omega} |\nabla u_n|^p dx - \int_{\Omega} |\nabla (u_n - u)|^p dx \) = \int_{\Omega} |\nabla u|^p dx.
\]
Now, we claim
\[
	\intO \frac{|u|^p}{d(x, K)^p} dx = 1.
\]
If we show this claim, since we already know that $u \in Y_p$, we have
\[
	\bar{H}_p(\Omega) \le \intO |\nabla u|^p dx \le \liminf_{n \to \infty} \intO |\nabla u_n|^p dx = \bar{H}_p(\Omega),
\]
which implies that $u \in Y_p$ is a minimizer of $\bar{H}_p(\Omega)$.

{\bf (Step 1)} $u \not\equiv 0$.

By contradiction, we assume $u \equiv 0$.
Rellich's theorem implies $u_n \overset{s}{\to} u \equiv 0$ in $L^p(\Omega)$ (up to a subsequence).
Then by Cherrier type $W^{1,p}(\Omega)$-Hardy inequality \eqref{Eq:Cherrier_J} in Proposition \ref{Cherrier_J},
we see
\begin{align*}
	\(\(\frac{k-p}{p}\)^p - \eps \) \underbrace{\intO \frac{|u_n|^p}{d(x, K)^p}dx}_{=1} \le
	\underbrace{\int_O |\nabla u_n|^p dx}_{= \bar{J}_{K,p}(\Omega) + o_n(1)} + C(\eps) \underbrace{\intO |u_n|^p dx}_{= o_n(1)}
\end{align*}
Letting $n \to \infty$ and then $\eps \to 0$, then we have $\(\frac{k-p}{p}\)^p \le \bar{J}_{K,p}(\Omega)$. 
This contradicts to the assumption $\bar{J}_{K,p}(\Omega) < \(\frac{k-p}{p}\)^p$.

{\bf (Step 2)} $\intO \frac{|u|^p}{d(x, K)^p} dx = 1$. 

Assume the contrary that $\intO \frac{|u|^p}{d(x, K)^p} dx < 1$. 
Then
\begin{align*}
	&0 < 1 - \intO \frac{|u|^p}{d(x, K)^p} dx = \intO \frac{|u_n|^p}{d(x, K)^p} dx - \intO \frac{|u|^p}{d(x, K)^p} dx \\
	&=  \intO \frac{|u_n - u|^p}{d(x, K)^p} dx + o_n(1) \\
	&\le (\(\frac{k-p}{p}\)^p - \eps)^{-1} \( \intO |\nabla (u_n - u)|^p dx + C(\eps) \intO |u_n-u|^p dx \) \\
	&= (\(\frac{k-p}{p}\)^p - \eps)^{-1} 
\( \intO |\nabla u_n|^p dx - \intO |\nabla u|^p dx + o_n(1) \) \\
	&\le (\(\frac{k-p}{p}\)^p - \eps)^{-1} \( \bar{J}_{K,p}(\Omega) + o_n(1) - \bar{J}_{K,p}(\Omega) \intO \frac{|u|^p}{|x|^p} dx \) \\
	&= (\(\frac{k-p}{p}\)^p - \eps)^{-1} \bar{J}_{K,p}(\Omega) \( 1 - \intO \frac{|u|^p}{d(x, K)^p} dx + o_n(1) \). 
\end{align*}
Here,
in the 2nd line, we have used Brezis-Lieb identity for $\{ \frac{|u_n(x)|}{d(x, K)} \}_{n \in \N}$,
in the 3rd line, we have used Cherrier-type inequality \eqref{Eq:Cherrier_J} in \eqref{Cherrier_J},
in the 4th line, we have used Brezis-Lieb identity for the gradient \eqref{BL_gradient},
in the 5th line, we have used the definition of $\bar{J}_{K,p}(\Omega)$.
Taking $n \to \infty$ first and then $\eps \to 0$, we get a contradiction $\(\frac{k-p}{p}\)^p \le \bar{J}_{K,p}(\Omega)$.

The proof of Theorem \ref{Thm1} is completed.
\qed

\vspace{1em}\noindent
{\it Proof of Theorem \ref{Thm3}}.
Recall the minimization problem \eqref{X_N}, \eqref{Cbar}:
\begin{align*}
	&X_N = \{ u \in W^{1,N}(\Omega) \ : \ \intO \frac{u}{|x|^N (\log \frac{1}{|x|})^N} dx = 0 \}, \\
	&\bar{C}_N(\Omega) = \inf_{\substack{u \in X_N \\ u \neq 0}} \frac{\int_{\Omega} |\nabla u|^N dx}{\int_{\Omega} \frac{|u|^N}{|x|^N (\log \frac{1}{|x|})^N} dx}.
\end{align*}
Put $A: W^{1,N}(\Omega) \to \re$ as
\[
	A(u) := \intO \frac{u}{|x|^N (\log \frac{1}{|x|})^N} \, dx, \quad u \in W^{1,N}(\Omega).
\]
Then $X_N = \{ u \in W^{1,N}(\Omega) \ : \ A(u) = 0 \}$.
Since
\begin{align*}
	|A(u)| &\le \(\intO \frac{|u|^N}{|x|^N (\log \frac{1}{|x|})^N} dx \)^{1/N} \(\intO \frac{dx}{|x|^N (\log \frac{1}{|x|})^N} \)^{1-1/N} \\
	&\le C \(\intO |\nabla u|^N + |u|^N dx \)^{1/N}
\end{align*}
by Cherrier-type Hardy inequality \eqref{Eq:Cherrier_N} in Proposition \ref{Cherrier_N}, $A$ is a bounded linear functional on $W^{1,N}(\Omega)$.
Thus $X_N$ is a closed subspace in $W^{1,N}(\Omega)$.

First, we prove that $\bar{C}_N(\Omega) > 0$. 
Assume the contrary that $\bar{C}_N(\Omega) = 0$. 
Then there exists a sequence $\{ u_n \} \subset W^{1,N}(\Omega)$ such that
\begin{align*}
	\begin{cases}
	&\intO |\nabla u_n|^N dx \to \bar{C}_N(\Omega) = 0, \\
	&\intO \frac{u_n}{|x|^N (\log \frac{1}{|x|})^N} dx = 0 \quad (n \in \N), \\
	&\intO \frac{|u_n|^N}{|x|^N (\log \frac{1}{|x|})^N} dx = 1 \quad (n \in \N).
	\end{cases}
\end{align*}
Since
\[
	\intO |\nabla u|^N dx \le \liminf_{n \to \infty} \intO |\nabla u_n|^N dx = 0,
\]
$u$ is a constant on $\Omega$.
Also since $u \in X_N$ by the weak closedness of $X_N$, we obtain $u \equiv 0$.
Take $\phi \in C_0^{\infty}(\Omega)$ such that $0 \le \phi \le 1$, $\phi \equiv 1$ in a neighborhood of $0 \in \Omega$ and consider $v_n = \phi u_n$.
Then as in the proof of Theorem \ref{Thm2}, we easily  see that $\intO |\nabla v_n|^N dx = o_n(1)$ and $\intO \frac{|v_n|^N}{|x|^N (\log \frac{1}{|x|})^N} dx \ge C > 0$ for some $C > 0$.
Then by testing \eqref{C_N} by $v_n \in W^{1,N}_0(\Omega)$, we obtain a contradiction that 
\[
	0 < \(\frac{N-1}{N}\)^N \le \frac{\intO |\nabla v_n|^N dx}{\intO \frac{|v_n|^p}{|x|^N (\log \frac{1}{|x|})^N} dx} = o_n(1).
\]
Thus $\bar{C}_N(\Omega) > 0$.

Define
\[
	d_n = \( \intO \frac{1}{|x|^N (\log \frac{1}{|x|})^N} dx \)^{-1} \intO \frac{T(u_n-u)}{|x|^N (\log \frac{1}{|x|})^N} dx, \quad (n \in \N).
\]
Then we see $\intO \frac{T(u_n-u) -d_n}{|x|^N (\log \frac{1}{|x|})^N} dx = 0$ and $T(u_n - u) - d_n \in X_N$.
Also since 
\[
	u_n(x) \to u(x) \quad a.e. \ \Omega, \quad \frac{|T(u_n-u)|}{|x|^N (\log \frac{1}{|x|})^N} \le \frac{1}{|x|^N (\log \frac{1}{|x|})^N} \in L^1(\Omega),
\] 
Lebesge's dominated convergence theorem implies that $d_n = o_n(1)$.
Define $F, G \in C^1(X_N, \re)$ as
\begin{align*}
	F(u) := \int_{\Omega} |\nabla u|^N dx, \quad G(u) := \int_\Omega \frac{|u|^N}{|x|^N (\log \frac{1}{|x|})^N} dx.
\end{align*}
By Ekeland variational principle Proposition \ref{prop:Ekeland}, there exists $\{u_n\} \subset X_N$ such that 
\begin{align*}
	&F(u_n) \to \bar{C}_N(\Omega), \quad G(u_n) = 1 \quad (n \in \mathbb{N}), \\
	&h_n := \frac{1}{N} \( F'(u_n) - (\bar{C}_N(\Omega) + o_n(1)) G'(u_n) \) \to 0 \quad \text{in } (X_N)^{*}.
\end{align*}
Since
\[
	\intO |u_n|^N dx \le \max_{x \in \Omega} (|x|^N (\log \frac{1}{|x|})^N) \intO \frac{|u_n|^N}{|x|^N (\log \frac{1}{|x|})^N} dx = O_n(1),
\]
we see that $\{ u_n \} \subset X_N$ is $W^{1,N}$-norm bounded. 
Thus up to a subsequence, there exists $u \in W^{1, N}(\Omega)$ such that
\begin{align*}
	\begin{cases}
	&u_n \overset{w}{\weakto} u \quad \text{weakly in} \ W^{1, N}(\Omega), \\
	&u_n \overset{s}{\to} u \quad \text{in} \ L^N(\Omega), \\
	&u_n(x) \to u(x) \quad a.e. \ \Omega.
	\end{cases}
\end{align*}
Since $X_N$ is weakly closed in $W^{1,N}(\Omega)$, we see that 
\[
	\int_{\Omega} \frac{u}{|x|^N (\log \frac{1}{|x|})^N} dx = 0
\] 
and by Fatou's lemma, 
\[
	0 \le \int_{\Omega} \frac{|u|^N}{|x|^N (\log \frac{1}{|x|})^N} dx \le \liminf_{n \to \infty} \int_{\Omega} \frac{|u_n|^N}{|x|^N (\log \frac{1}{|x|})^N}  dx = 1.
\]
We see
\begin{align*}
	\langle h_n, T(u_n - u) - d_n \rangle &= \int_{\Omega} |\nabla u_n|^{N-2} \nabla u_n \cdot \nabla T(u_n - u) dx \\
	&- (\bar{C}_N(\Omega) + o_n(1)) \intO \frac{|u_n|^{N-2} u_n \, T(u_n - u)}{{{|x|}^N (\log \frac{1}{|x|})^N}} dx,
\end{align*}
which implies
\begin{align*}
	&\intO \( |\nabla u_n|^{N-2} \nabla u_n - |\nabla u|^{N-2} \nabla u \) \cdot \nabla T(u_n - u) dx \\
	&= \langle h_n, T(u_n - u) - d_n \rangle \quad \cdots (A) \\
	&+ (\bar{C}_N(\Omega) + o_n(1)) \intO \frac{|u_n|^{N-2} u_n \, T(u_n - u)}{{{|x|}^N (\log \frac{1}{|x|})^N}} \, dx \quad \cdots (B) \\
	&- (\bar{C}_N(\Omega) + o_n(1)) d_n \intO \frac{|u_n|^{N-2} u_n}{{{|x|}^N (\log \frac{1}{|x|})^N}} \, dx \quad \cdots (C) \\
	&- \intO |\nabla u|^{N-2} \nabla u \cdot \nabla T(u_n - u) dx. \quad \cdots (D)
\end{align*}
In the following, we estimate (A), (B), (C), (D) respectively.

\vspace{1em}\noindent
{\it Estimate of (A)}:
Since $h_n = o_n(1)$ in $(X_N)^*$ and $\sup_{n \in \N} \| T(u_n-u) -d_n \|_{W^{1,N}(\Omega)} = O_n(1)$,
we see
\[
	(A) = \langle h_n, T(u_n - u) -d_n \rangle = o_n(1) \quad (n \to \infty).
\]

\vspace{1em}\noindent
{\it Estimate of (B)}:
\begin{align*}
	\left| \intO \frac{|u_n|^{N-2} u_n \, T(u_n - u)}{{{|x|}^N (\log \frac{1}{|x|})^N}} dx \right| 
	\le \(\intO \frac{|u_n|^N}{|x|^N (\log \frac{1}{|x|})^N} dx\)^{\frac{N-1}{N}} \(\intO \frac{|T(u_n-u)|^N}{|x|^N (\log \frac{1}{|x|})^N} dx\)^{\frac{1}{N}}
\end{align*}
and
\[
	 \frac{|T(u_n-u)|^N}{|x|^N (\log \frac{1}{|x|})^N} \le \frac{1}{|x|^N (\log \frac{1}{|x|})^N} \in L^1(\Omega), \quad \frac{|T(u_n-u)|^N}{|x|^N (\log \frac{1}{|x|})^N} \to 0 \quad a.e. \ \text{in} \ \Omega.
\]
Thus $(B) = o_n(1)$ by Lebesgue's dominated convergence theorem.

\vspace{1em}\noindent
{\it Estimate of (C)}:
Since
\begin{align*}
	\left| \intO \frac{|u_n|^{N-2} u_n}{{{|x|}^N (\log \frac{1}{|x|})^N}} dx \right| 
	\le \(\intO \frac{|u_n|^N}{|x|^N (\log \frac{1}{|x|})^N} dx\)^{\frac{N-1}{N}} \(\intO \frac{1}{|x|^N (\log \frac{1}{|x|})^N} dx\)^{\frac{1}{N}},
\end{align*}
and $d_n = o_n(1)$, we have $(C) = o_n(1)$.

\vspace{1em}\noindent
{\it Estimate of (D)}:
Let
\[
	f: L^N(\Omega, \re^N) \to \re, \quad f(\vec{v}) = \intO |\nabla u|^{N-2} \nabla u \cdot \vec{v} \, dx.
\]
We can easily check that $f \in (L^N(\Omega, \re^N))^*$ and $\nabla T(u_n - u) \overset{w}{\weakto} 0$ weakly in $L^N(\Omega, \re^N)$.
Thus $(C) = f(\nabla T(u_n-u)) = o_n(1)$ by the definition of weak convergence.

In conclusion, the assumption (2) in Valeriola-Willem's Proposition \ref{Prop:VW} 
\[
	\int_{\Omega} \( |\nabla u_n|^{N-2} \nabla u_n - |\nabla u|^{N-2} \nabla u \) \cdot \nabla T(u_n - u)\, dx
	\to 0 \quad (n \to \infty)
\]
holds for a minimizing Palais-Smale sequence $\{ u_n \}$.
Thus by Proposition \ref{Prop:VW} we have Brezis-Lieb type identiy \eqref{BL_gradient} when $p = N$:
\[
	\lim_{n \to \infty} \( \int_{\Omega} |\nabla u_n|^N dx - \int_{\Omega} |\nabla (u_n - u)|^N dx \) = \int_{\Omega} |\nabla u|^N dx.
\]

Now, we claim that
\begin{align*}
	\int_{\Omega} \frac{|u|^N}{|x|^N (\log \frac{1}{|x|})^N} dx =1.
\end{align*}
If we show this claim, since we already know that $u \in X_N$,
we have
\[
	\bar{C}_N(\Omega) \le \intO |\nabla u|^N dx \le \liminf_{n \to \infty} \intO |\nabla u_n|^N dx = \bar{C}_N(\Omega),
\]
which implies that $u \in X_N$ is a minimizer of $\bar{C}_N(\Omega)$.

{\bf (Step 1)} $u \not\equiv 0$.

Assume by contradiction that $u \equiv 0$.
Then by the compactness of the embedding $W^{1,N} (\Omega) \hookrightarrow L^N (\Omega)$, 
we see $\intO |u_n|^N dx \to 0$.
Thus by the critical Hardy inequality for $W^{1,N}(\Omega)$, we have
\begin{align*}
	1 = \intO \frac{|u_n|^N}{|x|^N (\log \frac{1}{|x|})^N} \, dx
	&\le (C_N - \eps)^{-1} \int_{\Omega} |\nabla u_n|^N dx + o_n(1) \\
	&\le (C_N - \eps)^{-1} (\bar{C}_N(\Omega) + o_n(1)) + o_n(1).
\end{align*}
Letting $m \to \infty$ and then $\eps \to 0$, we have $1 \le C_N \bar{C}_N(\Omega)$, which contradicts the assumption $\bar{C}_N(\Omega) < C_N$.
Therefore, $u \not\equiv 0$. 

{\bf (Step 2)} $\int_{\Omega} \frac{|u|^N}{|x|^N (\log \frac{1}{|x|})^N} dx =1$.

Let us assume by contradiction that the claim does not hold and $\int_\Omega \frac{|u|^N}{|x|^N ( \log \frac{1}{|x|} )^N} \, dx < 1$. 
Since $u_n -u \rightharpoonup 0$ in $W^{1,N}(\Omega)$, 
Rellich's theorem implies $\int_{\Omega} |u_n - u|^N \, dx = o_n(1)$,
and by the Cherrier type inequaliity \eqref{Eq:Cherrier_N} in Proposition \ref{Cherrier_N} for $W^{1,N}(\Omega)$-functions again, 
we have
\begin{align*}
	0 &< 1- \intO \frac{|u|^N}{|x|^N \( \log \frac{1}{|x|} \)^N} \, dx 
	= \intO \frac{|u_n|^N}{|x|^N \( \log \frac{1}{|x|} \)^N} \, dx - \intO \frac{|u|^N}{|x|^N \( \log \frac{1}{|x|} \)^N} \, dx \\
	&= \int_\Omega \frac{|u_n -u|^N}{|x|^N \( \log \frac{1}{|x|} \)^N} \, dx + o_n(1) \\
	&\le (C_N - \eps)^{-1} \( \intO |\nabla (u_n -u) |^N \,dx + C \intO |u_n - u|^N \, dx \) + o_n(1) \\
	&\le (C_N - \eps)^{-1} \( \intO |\nabla u_n|^N \,dx - \intO |\nabla u |^N \,dx \) + o_n(1) \\
	&\le (C_N - \eps)^{-1} \( \bar{C}_N(\Omega) - \bar{C}_N(\Omega) \intO \frac{|u|^N}{|x|^N (\log \frac{1}{|x|})^N} \,dx \) + o_n(1) \\
	&= (C_N - \eps)^{-1} \bar{C}_N(\Omega) \(1 - \intO \frac{|u|^N}{|x|^N (\log \frac{1}{|x|})^N} \,dx + o_n(1) \)
\end{align*}
as $n \to \infty$. 
Here,
in the 2nd line, we have used Brezis-Lieb identity for $\{ \frac{|u_n(x)|}{|x| (\log \frac{1}{|x|})} \}_{n \in \N}$,
in the 3rd line, we have used Cherrier-type inequality \eqref{Eq:Cherrier_N} in Proposition \ref{Cherrier_N},
in the 4th line, we have used Brezis-Lieb identity for the gradient \eqref{BL_gradient}.
Also in the 5th line, since $\intO \frac{u}{|x|^N \( \log \frac{1}{|x|} \)^N} \, dx =0$, 
we have $\bar{C}_N(\Omega) \intO \frac{|u|^N}{|x|^N \( \log \frac{1}{|x|} \)^N} \, dx \le \intO |\nabla u|^N \, dx$
by the definition of $\bar{C}_N(\Omega)$. 
Therefore letting $n \to \infty$, we have
\begin{align*}
	1- \int_\Omega \frac{|u|^N}{|x|^N \( \log \frac{1}{|x|} \)^N} \, dx \le (C_N - \eps)^{-1} \bar{C}_N(\Omega) \(1- \intO \frac{|u|^N}{|x|^N \( \log \frac{1}{|x|} \)^N} \, dx \),
\end{align*}
which implies that $1 \le (C_N - \eps)^{-1} \bar{C}_N(\Omega)$. 
Again, letting $\eps \to 0$, we have a contradiction to the assumption that $\bar{C}_N(\Omega) < C_N$.
Therefore, we have the claim.

The proof is now completed.
\qed

\section{Verification of assumptions in Theorems}

Concerning the assumption of Theorem \ref{Thm1}:
\[
	\bar{H}_p(\Omega) < \(\frac{N-p}{p} \)^p \quad (1 < p < N),
\]
where $\bar{H}_p(\Omega)$ is defined in \eqref{Hbar},
first we assume $p \ge 2$ and let $\Omega = B_R$ be a ball with center $0$ and radius $R > 0$. 
For $x = (x_1, \cdots, x_N)$, put $u_i(x) = x_i$. 
Then we have $u_i \in X_p$ by symmetry where $X_p$ is as in \eqref{X_p} and
\begin{align*}
	&|\nabla u_i(x)| = |e_i| = 1, \quad \int_{B_R} |\nabla u_i|^p dx = |B_R|, \\
	&\frac{1}{N}|B_R| = \int_{B_R} \frac{|x_i|^2}{|x|^2} dx \le \( \int_{B_R} \frac{|x_i|^p}{|x|^p} dx \)^{\frac{2}{p}} |B_R|^{1-\frac{2}{p}}
\end{align*}
since $p \ge 2$, which implies
\[
	\int_{B_R} \frac{|x_i|^p}{|x|^p} dx \ge \( \frac{1}{N} \)^{\frac{p}{2}} |B_R|.
\]
Thus
\begin{align*}
	\bar{H}_p(B_R) \le \frac{\int_{B_R} |\nabla u_i|^p dx}{\int_{B_R} \frac{|u_i|^p}{|x|^p} dx} \le \frac{|B_R|}{|B_R| (1/N)^{p/2}} =  N^{\frac{p}{2}}.
\end{align*}
Therefore, if $N^{\frac{p}{2}} < \(\frac{N-p}{p}\)^p$, which is equivalent to
\begin{align*}
	N >\(\frac{p + \sqrt{p^2 + 4p}}{2}\)^2,
\end{align*}
the assumption $\bar{H}_p(B_R) <\(\frac{N-p}{p} \)^p$ holds when $p \ge 2$.

When $1 < p < 2$, we take again $u_i(x) = x_i = r \omega_i$, where $r = |x|$ and $\omega_i = \frac{x_i}{r}$.
Then
\begin{align*}
	\int_{B_R} \frac{|u_i|^p}{|x|^p} dx &= \int_0^R \int_{\Sp^{N-1}} \frac{r^p |\omega_i|^p}{r^p} r^{N-1} dr dS_{\omega} = \(\int_0^R r^{N-1} dr \) \(\int_{\Sp^{N-1}} |\omega_i|^p dS_{\omega} \) \\
	&\ge \frac{R^{N}}{N} \int_{\Sp^{N-1}} |\omega_i|^2 dS_{\omega} = \frac{R^{N}}{N} \frac{|\Sp^{N-1}|}{N} 
\end{align*}
since $|\omega_i| \le 1$ and $p \in (1,2)$, where $|\Sp^{N-1}|$ denotes $(N-1)$-dimensional area of the unit sphere $\Sp^{N-1}$.
Thus
\begin{align*}
	\bar{H}_p(B_R) \le \frac{\int_{B_R} |\nabla u_i|^p dx}{\int_{B_R} \frac{|u_i|^p}{|x|^p} dx} \le \frac{|B_R|}{\frac{|\Sp^{N-1}|}{N^2} R^N} 
	= \frac{\frac{|\Sp^{N-1}|}{N} R^N}{\frac{|\Sp^{N-1}|}{N^2} R^N} = N.
\end{align*}
Note that $\lim_{N \to \infty} \frac{\(\frac{N-p}{p}\)^p}{N} = +\infty$ for a fixed $p \in (1,2)$.
Therefore, for given $p \in (1, 2)$, if $N$ is sufficiently large such that $N < \(\frac{N-p}{p}\)^p$,
then we have $\bar{H}_p(B_R) <\(\frac{N-p}{p} \)^p$ for $p \in (1, 2)$.

\vspace{1em}
Similarly, for the assumption of Theorem \ref{Thm2},
let $\Omega = B_R$ be a ball with $0 < R < 1$ and $x = (x_1, \cdots, x_N)$.
Again by testing the Hardy quotient by $u_i(x) = x_i \in X_N$ where $X_N$ is as in \eqref{X_N}, we obtain
\begin{align*}
	\bar{C}_N(B_R) \le \left\{ \frac{\intO |\nabla u_i|^N dx}{\intO \frac{|u_i|^N}{|x|^N (\log \frac{1}{|x|})^N}dx} \right\}
	\le \frac{|B_R|}{\int_0^R \frac{r^{N-1}}{(\log \frac{1}{r})^N} dr \int_{\Sp^{N-1}} |\omega_i|^N dS_{\omega}}.
\end{align*}
Now, we have
\begin{align*}
	&\int_0^R \frac{r^{N-1}}{(\log \frac{1}{r})^N} dr = \int_{N \log \frac{1}{R}}^{\infty} t^{-N} e^{-t} dt \\
	&= \frac{(N \log \frac{1}{R})^{1-N}}{N-1} e^{-N \log \frac{1}{R}} + \frac{1}{1-N} \int_{N \log \frac{1}{R}}^{\infty} t^{1-N} e^{-t} dt,
\end{align*}
and by l'\^Hopital's rule, we see
\begin{align*}
	\lim_{R \to 1-0} \frac{\int_{N \log \frac{1}{R}}^{\infty} t^{1-N} e^{-t} dt}{\int_{N \log \frac{1}{R}}^{\infty} t^{-N} e^{-t} dt}
	= \lim_{R \to 1-0} \frac{t^{1-N} e^{-t}}{t^{-N} e^{-t}} \Big |_{t = N \log \frac{1}{R}} = \lim_{R \to 1-0} N \log \frac{1}{R} = 0. 
\end{align*}
Thus there exists a constant $A = A(N) > 0$ such that 
\[
	\int_0^R \frac{r^{N-1}}{(\log \frac{1}{r})^N} dr = A(N) (\log \frac{1}{R})^{1-N} + o((\log \frac{1}{R})^{1-N}) 
\]
as $R \to 1-0$ and
\begin{align*}
	\bar{C}_N(B_R) &\le \frac{|B_R|}{\( A(N)(\log \frac{1}{R})^{1-N} + o((\log \frac{1}{R})^{1-N})\) \int_{\Sp^{N-1}} |\omega_i|^N dS_{\omega}} \\
	&\le C (\log \frac{1}{R})^{N-1} \to 0, \quad (R \to 1 - 0). 
\end{align*}
Therefore the assumption 
\[
	\bar{C}_N(B_R) < \(\frac{N-1}{N} \)^N
\]
is satisfied for ``fat" balls $B_R$ with $R < 1$ and $1-R$ is sufficiently small.


\section*{Acknowledgment}
The first author (F.T.) was supported by JSPS Grant-in-Aid for Scientific Research (B), No. 23K25781.
This work was partly supported by Osaka City University Advanced Mathematical Institute (MEXT Joint Usage/Research Center on Mathematics and Theoretical Physics). 


\end{document}